\documentclass[11pt,a4paper,dvipsnames]{scrartcl}

\usepackage{geometry}\geometry{margin=1.1in}
\usepackage{kbordermatrix}

\usepackage[sortcites, backend=biber, giveninits=true, style=ieee, dashed=false, sorting = nty,citestyle=numeric]{biblatex}

\addbibresource{refs.bib}
\AtEveryBibitem{\clearfield{month}
\clearfield{day}
\clearfield{doi}
}

\usepackage{amsmath,amsthm,amssymb,url}
\usepackage{csquotes}
\usepackage[shortlabels]{enumitem}

\setlist[enumerate]{itemsep=0mm,parsep=2mm,topsep=5pt,label=\textit{(\alph*)}}

\newtheorem{theorem}{Theorem}[section]
\newtheorem{proposition}[theorem]{Proposition}
\newtheorem{lemma}[theorem]{Lemma}
\newtheorem{claim}[theorem]{Claim}
\newtheorem{corollary}[theorem]{Corollary}

\newtheorem{conjecture}[theorem]{Conjecture}

\usepackage[english]{babel}
\usepackage{microtype}
\usepackage{mathtools}

\usepackage{pgf,tikz,pgfplots}
\pgfplotsset{compat=1.18}
\usepackage{subcaption}
\def\normaledge{1.2}
\definecolor{edgeblack}{rgb}{0.25,0.25,0.25}
\definecolor{vertexblack}{rgb}{0,0,0}

\usepackage{xcolor}
\colorlet{myurlcolor}{Aquamarine}
\definecolor{mycitecolor}{HTML}{5590B4}

\usepackage{hyperref}
\hypersetup{
linkcolor  = black,
citecolor  = mycitecolor!85!black, %
urlcolor   = myurlcolor!85!black,
colorlinks = true,
}
\usepackage[capitalise, noabbrev, nameinlink, nosort]{cleveref}
\crefname{equation}{}{}
\crefname{conjecture}{Conjecture}{Conjectures}
\crefname{claim}{Claim}{Claims}

\AddToHook{env/lemma/begin}{\crefalias{theorem}{lemma}}
\AddToHook{env/conjecture/begin}{\crefalias{theorem}{conjecture}}
\AddToHook{env/proposition/begin}{\crefalias{theorem}{proposition}}
\AddToHook{env/claim/begin}{\crefalias{theorem}{claim}}
\AddToHook{env/corollary/begin}{\crefalias{theorem}{corollary}}

\newcommand{\real}{{\mathbb{R}}}
\newcommand{\complex}{{\mathbb{C}}}
\newcommand{\rat}{{\mathbb{Q}}}

\newcommand{\R}{{\mathbb{R}}}

\newcommand{\cB}{{\mathcal{B}}}
\newcommand{\cR}{{\mathcal{R}}}
\newcommand{\cS}{{\mathcal{S}}}
\newcommand{\cH}{{\mathcal{H}}}
\newcommand{\cM}{{\mathcal{M}}}

\newcommand{\E}{\mathbb{E}}
\newcommand{\Prob}{\mathbb{P}}
\newcommand{\symcompletion}[1]{\mathcal{S}_{#1}}
\newcommand{\birigid}[1]{$#1$-birigid}

\newcommand{\locallycompletable}[1]{$d$-completable}

\DeclareMathOperator{\rank}{rank}

\begin{document}

\title{Sufficient conditions for the bipartite rigidity, symmetric completability and hyperconnectivity of graphs}

\author{D\'aniel Garamv\"olgyi\thanks{
HUN-REN-ELTE Egerv\'ary Research Group
on Combinatorial Optimization, P\'azm\'any P\'eter s\'et\'any 1/C, 1117 Budapest, Hungary.
e-mail: \texttt{daniel.garamvolgyi@ttk.elte.hu}}
\and
Bill Jackson\thanks{Queen Mary University of London, London, E1 4NS, UK, and
the HUN-REN-ELTE Egerv\'ary Research Group
on Combinatorial Optimization, P\'azm\'any P\'eter s\'et\'any 1/C, 1117 Budapest, Hungary.
e-mail: \texttt{b.jackson@qmul.ac.uk}}
\and
Tibor Jord\'an\thanks{Department of Operations Research, ELTE E\"otv\"os Lor\'and University, and the HUN-REN-ELTE Egerv\'ary Research Group
on Combinatorial Optimization, P\'azm\'any P\'eter s\'et\'any 1/C, 1117 Budapest, Hungary.
e-mail: \texttt{tibor.jordan@ttk.elte.hu}}
\and 
Soma Vill\'anyi\thanks{Department of Operations Research, ELTE E\"otv\"os Lor\'and University, and the HUN-REN-ELTE Egerv\'ary Research Group
on Combinatorial Optimization, P\'azm\'any P\'eter s\'et\'any 1/C, 1117 Budapest, Hungary. e-mail: \texttt{soma.villanyi@ttk.elte.hu}}}

\date{}

\maketitle
\begin{abstract}
    We consider three matroids defined by Kalai in 1985: the \emph{symmetric completion matroid} $\mathcal{S}_d$ on the edge set of a looped complete graph; the \emph{hyperconnectivity matroid} $\mathcal{H}_d$ on the edge set of a complete graph; and the \emph{birigidity matroid} $\mathcal{B}_d$ on the edge set of a complete bipartite graph. These matroids arise in the study of low rank completion of partially filled symmetric, skew-symmetric and rectangular matrices, respectively. We give sufficient  conditions for a graph $G$ to have maximum possible rank in these matroids. For $\mathcal{S}_d$ and $\mathcal{H}_d$, our conditions are in terms of the minimum degree of $G$ and are best possible. For $\mathcal{B}_d$, our condition is in terms of the connectivity of $G$. 
    
    Our results have several implications for the unique completability of low-rank matrices. In particular, they imply that: almost all sufficiently large $n \times n$ positive semidefinite matrices of rank $d$ are uniquely determined by any subset of their entries which includes at least $(n + d + 1)/2$ entries from each row; almost all $m \times n$  matrices of rank $d$ are uniquely determined by any subset of their entries whose positions define a spanning subgraph of $K_{m,n}$ which  is $k_d$-connected, for some constant $k_d=\mbox{O}(d^3)$.
\end{abstract}

\section{Introduction}

We consider three families of matroids defined by Kalai~\cite{K} on the edge set of a graph $G=(V,E)$.  Suppose $d\geq 1$ is an integer and  $p:V\to \R^d$ is  a realisation of $G$ in $\R^d$. We say that $p$ is {\em generic} if the multiset of coordinates of the points $p(v)$, $v\in V$, is algebraically independent over $\rat$.
\begin{itemize}
    \item When $G$ is {\em semisimple}, i.e.~each vertex of $G$ is incident with at most one loop and no  parallel edges, the {\em symmetric completion matroid of $(G,p)$}, denoted by $\cS_d(G,p)$, is the row matroid of the $|E|\times d|V|$ matrix $S(G,p)$ with rows indexed by $E$ and sets of $d$ consecutive columns indexed by $V$, in which the row indexed by a non-loop edge $uv\in E$ is 
    \vspace{-.5em}
\[
    \kbordermatrix{
    & &  u & & v & \\
    e=uv & 0 \dots 0 & p(v) & 0\dots 0 & p(u) & 0\dots 0
    }
\]
    and the row indexed by a loop edge $uu\in E$ is
    \vspace{-.5em}
\[
    \kbordermatrix{
    & &  u  & \\
    e=uu & 0 \dots 0 & p(u)  & 0\dots 0
    }.
\]
    The {\em $d$-dimensional  symmetric completion matroid of $G$},  denoted by $\cS_d(G)$, is given by the matroid $\cS_d(G,p)$ for any generic $p$. Note that this is well-defined, i.e.\ it does not depend on the (generic) choice of $p$.
    
    \item When $G$ is simple, the {\em hyperconnectivity  matroid} of $(G,p)$,  denoted by $\cH_d(G,p)$, is the row matroid of the $|E|\times d|V|$ matrix $H(G,p)$ with rows indexed by $E$ and sets of $d$ consecutive columns indexed by (a fixed ordering of the vertices in) $V$, in which the row indexed by an edge $uv\in E$ with $u<v$ is
    \vspace{-.5em}
\[
    \kbordermatrix{
    & &  u & & v & \\
    e=uv & 0 \dots 0 & p(v) & 0\dots 0 & -p(u) & 0\dots 0
    }.
\]
    The {\em $d$-dimensional  hyperconnectivity matroid of $G$},  denoted by $\cH_d(G)$, is given by the matroid $\cH_d(G,p)$ for any generic $p$. Note that $\cH_d(G,p)$ does not depend on the chosen ordering of $V$ or the (generic) choice of $p$.
    
    \item When $G$ is bipartite, the $d$-dimensional  symmetric completion  and hyperconnectivity  matroids of $G$ are identical. We refer to this common matroid as the {\em  $d$-dimensional birigidity  matroid} of $G$, and denote it by $\cB_d(G)$.
\end{itemize}

Each of these matroids appears in the study of low rank matrix completion problems. For example, a partially  filled $m\times n$  matrix $M$ with generic entries is completable to a matrix of rank at most $d$ over $\complex$ if and only if the set of edges of the complete bipartite graph $K_{m,n}$ defined by the positions of the entries in $M$ is independent in $\cB_d(K_{m,n})$. Similar results link the rank $d$ symmetric matrix completion problem to $\cS_d$-independence, and the rank $2d$ skew-symmetric matrix completion problem 
to $\cH_{2d}$-independence.  
We refer the reader to~\cite{B,JJT, JJT2, RS, SC} for more information on these links. The links to skew-symmetric completion will be discussed in more detail in \cref{sec:skew} below.

All three matroids were characterised when $d=1$ by Kalai~\cite{K}: $\cS_1(G)$ is the even cycle matroid of a semisimple graph $G$;  $\cH_1(G)$ and $\cB_1(G)$ are both equal to the cycle matroid of $G$ when $G$ is simple, respectively bipartite. No polynomial algorithm for checking independence in these matroids is known for $d\geq 2$, although a graph theoretic NP-certificate for independence in $\cH_2(G)$  is given by Bernstein in~\cite{B}. 
We do at least know the maximum possible rank of each of these matroids. Let $K_n^\circ, K_n, K_{m,n}$ denote the complete semisimple graph on $n$ vertices, the complete simple graph on $n$ vertices, and the complete bipartite graph in which the sets of the bipartition have cardinality $m$ and $n$. Then Kalai~\cite{K} gives the following.
\begin{lemma}\label{lem:kalai}
\begin{align*}
    \rank \cS_d(K_n^\circ)&=\begin{cases}
        dn-\binom{d}{2} \mbox{ if $n\geq d$},\\  
        \binom{n+1}{2}  \mbox{ if $n\leq d$};
    \end{cases}\\
    \rank \cH_d(K_n)&=\begin{cases}
        dn-\binom{d+1}{2} \mbox{ if $n\geq d$},\\  
        \binom{n}{2}  \mbox{ if $n\leq d$};
    \end{cases}\\
    \rank \cB_d(K_{m,n})&=\begin{cases}
        d(m+n)-d^2 \mbox{ if $n,m\geq d$},\\  
        nm \mbox{  if $\min\{n,m\}\leq d$}.
    \end{cases}
\end{align*}
\end{lemma}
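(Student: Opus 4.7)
The plan is to prove each bound in two steps: an upper bound by exhibiting an explicit subspace in the right kernel of the defining matrix, and a matching lower bound by producing a specific realisation whose matrix attains the claimed rank. Since the rank of $S(G,p)$ (and its analogues) is upper semicontinuous in $p$, the generic rank is at least the rank of any specific realisation, so the two bounds together force equality.

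For the upper bound on $\rank \cS_d(K_n^\circ)$ when $n \geq d$, I would associate to each $d \times d$ skew-symmetric matrix $A$ the column vector $\alpha \in \R^{dn}$ defined by $\alpha_v = A p(v)$, and check that $\alpha$ lies in the right kernel of $S(K_n^\circ,p)$: the row indexed by a non-loop edge $uv$ applied to $\alpha$ gives $p(v)^T A p(u) + p(u)^T A p(v) = p(u)^T (A + A^T) p(v) = 0$, while the loop row $uu$ gives $p(u)^T A p(u) = 0$ by skew-symmetry. When $p(V)$ spans $\R^d$ (a generic condition as soon as $n \geq d$), the map $A \mapsto \alpha$ is injective, so the kernel contains a subspace of dimension $\binom{d}{2}$. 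The analogous scheme works for $\cH_d$ using \emph{symmetric} matrices $A$ (yielding $\binom{d+1}{2}$ trivial motions), and for $\cB_d$ using $\alpha_u = B p(u)$ on one bipartition class and $\alpha_w = -B^T p(w)$ on the other for an arbitrary $d \times d$ matrix $B$ (yielding $d^2$ motions, provided each class spans $\R^d$ generically, which needs $m,n \geq d$). The regimes $n \leq d$ and $\min\{m,n\} \leq d$ need no further argument for the upper bound, since the claimed value simply counts the edges of the graph.

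For the lower bound in the small regime ($n \leq d$, respectively $\min\{m,n\} \leq d$), I would place the vertices at the first few standard basis vectors of $\R^d$ and check by direct expansion that the rows of the resulting matrix are linearly independent. In the large regime ($n \geq d$, respectively $m,n \geq d$), I would induct on the number of vertices, with base case $n = d$ covered by the previous step. The inductive step is to show that adjoining a new vertex $v$ together with an appropriate set of incident edges in general position (for $\cS_d$: the loop at $v$ together with $d-1$ edges to old vertices; for $\cH_d$ and $\cB_d$: $d$ edges to $d$ old vertices) increases the rank by exactly $d$. This reduces to showing that a certain $d \times d$ submatrix formed by restricting the new rows to the $v$-block of columns is nonsingular for generic $p(v)$. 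The main obstacle is the case of $\cS_d$: the loop row involves only $p(v)$ and no neighbour coordinates, so the relevant submatrix mixes loop and non-loop contributions and its nonsingularity is not immediate. I would handle this by choosing an explicit configuration (e.g.\ a staircase of basis vectors for the neighbours together with a suitably chosen $p(v)$) that makes the submatrix triangular, then transfer to generic $p(v)$ by semicontinuity.
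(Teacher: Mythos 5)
The paper does not prove this lemma; it simply cites it as Kalai's result (\cite{K}), so there is no in-paper proof to compare against. Your proposal is a correct, standard reconstruction: exhibit a subspace of the right kernel to get the upper bound (skew-symmetric matrices for $\cS_d$, symmetric for $\cH_d$, arbitrary square matrices with the sign twist for $\cB_d$), then build a matching independent set of rows by induction on $|V|$ starting from an explicit base realisation. Two small remarks. First, the ``main obstacle'' you flag for $\cS_d$ is not actually an obstacle: when you add a new vertex $v$ via a loop at $v$ together with $d-1$ edges to old vertices $u_1,\dots,u_{d-1}$, the $d \times d$ submatrix you need to be nonsingular is the restriction of the new rows to the $v$-block of columns, whose rows are simply $p(v), p(u_1), \dots, p(u_{d-1})$; these are generically linearly independent with no further argument, so the explicit ``staircase'' construction is unnecessary (though it does work). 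In fact the paper's \cref{lem:ext}(a) (citing \cite{JJT,JJT2}) already records that the $0$-extension (including the looped variant) preserves $\cS_d$-independence, which is exactly this inductive step. Second, a terminology slip: the rank of $S(G,p)$ as a function of $p$ is \emph{lower} semicontinuous, not upper semicontinuous; the correct phrasing of the principle you invoke is that generic rank is at least the rank at any particular realisation because the set of $p$ attaining at least a given rank is Zariski-open. Neither issue affects the validity of the argument.
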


We say that a semisimple graph $G\subseteq K_n^\circ$  is {\em \locallycompletable{d}} if its edge set spans $\cS_d(K_n^\circ)$, that   a simple graph $G\subseteq K_n$  is {\em $d$-hyperconnected} if its edge set spans $\cH_d(K_n)$, and that a bipartite graph $G\subseteq K_{m,n}$  is {\em $d$-birigid}  if its edge set spans $\cB_d(K_{m,n})$. In this paper we obtain sufficient conditions for a graph to have these properties.

The \emph{degree} of a vertex $v$ in a semisimple graph $G$ is defined as the number of edges incident to $v$, counting a loop only once. We denote the minimum degree of $G$ by $\delta(G)$.
Our first main result is the following theorem, whose second part confirms a conjecture of Jackson, Jordán and Tanigawa (\cite[Conjecture 38]{JJT2}). 

\begin{theorem}\label{thm:mindegree}
    For every integer $d\geq 1$, there exist integers $h_d = O(d^2)$ and $s_d = O(d^2)$ such that the following hold.
    \begin{enumerate}[(a)]
        \item Every simple graph  $G$ on $n \geq h_d$ vertices with $\delta(G)\geq (n+d - 1)/2$ is $d$-hyperconnected. 
        \item Every semisimple graph $G$ on $n \geq s_d$ vertices with the property that $\delta(G) \geq (n+d-1)/2$ and all vertices which are not incident with a loop have degree at least $ (n+d)/2$ is $d$-completable. 
    \end{enumerate}
\end{theorem}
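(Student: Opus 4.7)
The plan is to adapt the general framework used by Krivelevich, Lew, and Michaeli for the rigidity matroid, equipping it with the matroid-specific structural tools available for $\cH_d$ and $\cS_d$.

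The first ingredient is a pair of standard inductive lemmas for each of these matroids: a \emph{$d$-extension} lemma, stating that if $H$ spans $\cH_d$ (resp.\ $\cS_d$) on $V(H)$ and $v\notin V(H)$ has at least $d$ neighbours in $V(H)$, then $H+v$ spans $\cH_d$ (resp.\ $\cS_d$) on $V(H)+v$; and a \emph{gluing} lemma, stating that if two subgraphs $H_1,H_2$ each span the matroid on their vertex sets and $|V(H_1)\cap V(H_2)|$ is large enough (at least $d+1$ for $\cH_d$, with a loop-sensitive version for $\cS_d$), then $H_1\cup H_2$ spans the matroid on the union. In the symmetric case, loops have to be handled explicitly: the observation behind \Cref{lem:kalai} is that a loop at $v$ contributes an extra unit of rank, which is consistent with, and should be exploited through, the asymmetric degree hypothesis on loop versus non-loop vertices.

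The second ingredient, and the technical heart of the argument, is a contrapositive analysis. I would assume $G$ satisfies the degree hypothesis but does not span $\cH_d$ (resp.\ $\cS_d$) on $V(G)$. Then $E(G)$ lies in a proper flat of the matroid, and this should translate into a combinatorial certificate for non-spanning: a partition-type obstruction, or a \emph{critically $k$-connected} subgraph $H\subseteq G$ (for a suitable $k=k(d)$) that witnesses the failure, analogous to the role played by vertex cuts in the rigidity case. Then I would apply the almost-tight vertex cover lower bound for critically $k$-connected graphs announced in the abstract: this forces $H$ to contain many vertices of small degree in $H$, which via the embedding into $G$ conflicts with the minimum degree assumption as soon as $n\geq h_d$ or $n\geq s_d$ for an appropriately chosen threshold polynomial in $d$.

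The main obstacle is the passage from "not spanning the matroid" to "contains a critically $k$-connected obstruction whose low-vertex-cover structure is incompatible with the degree bound". For rigidity this step is guided by the well-studied Maxwell-type submodular count function, but the analogous count functions for $\cH_d$ and $\cS_d$ are less developed, and for the symmetric case the presence of loops alters local rank computations in a way that the standard rigidity-theoretic toolkit does not capture. Calibrating the constants so that the new vertex cover bound actually bites against the $(n+d-1)/2$ and $(n+d)/2$ thresholds, and doing so uniformly in the loop pattern for part (b), is where I would expect the bulk of the technical effort to lie.
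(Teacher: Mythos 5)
Your plan misassigns the machinery. The vertex cover lower bound for critically $k$-connected (or $k$-biconnected) graphs is the engine of \cref{thm:main}, the bipartite connectivity result, not of \cref{thm:mindegree}. For the minimum-degree theorem, the paper never passes from ``not spanning'' to a ``critically $k$-connected obstruction,'' and there is no gluing lemma either. Instead the proof is direct and constructive, built on the notion of an $\cM$-seed (a vertex set $K$ such that some $\cM$-basis of $G$ is obtained from an independent subgraph on $K$ by repeated $d$-dimensional $0$-extensions). The degree hypothesis $\delta(G)\ge (n+d-1)/2$ feeds a Chernoff-bound argument (\cref{lem:existsk:mindegree}): a random vertex subset $X_0$ of density $\Theta(1/d)$ dominates every vertex $d$-fold with positive probability, and \cref{lem:small_kernel} converts $X_0$ into an $\cM$-seed $K$ with $|K|<n/3$. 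Since $\delta(G)>|K|$, there is an edge $u'v'$ outside $K$; \cref{lemma:deletablevertices} upgrades it to an edge $uv$ avoiding $K$ with $r(G)=r(G-u)+d=r(G-v)+d=r(G-u-v)+2d$.

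The crucial step you do not anticipate is how double $1$-extensions (and, in part (b), looped $1$-extensions) are used. \cref{lem:neighbours:clique} shows that these rank identities force $N_G(u)\cap N_G(v)$ to be $\cM$-linked pairwise, so after taking the $\cM$-closure (which is harmless since rigidity of $G$ and of its closure are equivalent) the common neighbourhood $X=N_G(u)\cap N_G(v)$ is a clique of size at least $d-1$ (with careful loop bookkeeping in part (b) pushing it to $d$, which is exactly why the two-tier degree hypothesis $(n+d-1)/2$ versus $(n+d)/2$ is needed). One then grows a spanning $\cM$-rigid subgraph from this clique by a chain of $0$-extensions, first to $N_G(v)$ and then to all of $V$, using the degree hypothesis at each step to guarantee $d$ neighbours already present. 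Your contrapositive route would require a Maxwell-type count or sparsity certificate for $\cH_d$ and $\cS_d$, which, as you yourself note, is not available and is precisely the obstruction the paper avoids by arguing constructively via seeds and extension operations.
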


\noindent
The degree bounds in \cref{thm:mindegree} are best possible when $d \geq 2$; see the discussion after the proof of \cref{thm:mindegree} in \cref{section:mindegree}. %

Our second main result shows that every sufficiently highly connected bipartite graph is $d$-birigid.

\begin{theorem}\label{thm:main} For every integer $d\geq 1$, there exists an integer $k_d=O(d^3)$ such that every $k_d$-connected bipartite graph is $d$-birigid.
\end{theorem}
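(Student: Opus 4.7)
The plan is to adapt Villányi's approach for the ordinary $d$-dimensional rigidity matroid to the bipartite setting, combining birigidity-preserving reduction operations with a structural analysis of critically $k$-connected graphs. The proof proceeds by induction on $|V(G)|$, taking $G = (A \cup B, E)$ to be a $k_d$-connected bipartite counterexample of smallest order.

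First, I would compile the elementary extension operations on $\mathcal{B}_d$. The bipartite $0$-extension, which adds a new vertex to one side of the bipartition with exactly $d$ neighbours on the opposite side, preserves $d$-birigidity because the $d$ new rows of $S(G,p)$ are generically independent modulo the old row space; a bipartite variant of the $1$-extension, which adds a vertex of degree $d+1$ on one side while inserting a compensating edge between the opposite side and a chosen vertex, should also preserve birigidity under suitable genericity (this is the bipartite analogue of the standard Henneberg moves used in $\cR_d$). The inverses of these operations supply the reduction steps.

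Second, I would show that a minimum counterexample must be critically $k_d$-connected. Indeed, if $G - v$ were still $k_d$-connected for some $v$, it would be $d$-birigid by induction, and since $\deg_G(v)\geq k_d$ is far larger than $d$, one could reattach $v$ using an inverse extension argument (combined with standard rank calculations in $\mathcal{B}_d$) to show $G$ is also $d$-birigid. I would then invoke the new almost-tight lower bound on the vertex cover number of critically $k$-connected graphs, announced in the abstract, to conclude that $G$ contains many vertices of degree $O(k_d) = O(d^3)$; since $A$ and $B$ are themselves independent sets, by symmetry I may assume a large collection of such low-degree vertices lies in one side, say $A$.

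Finally, among these candidates I would locate a vertex $v \in A$ on which an inverse $0$- or $1$-extension may be performed while keeping the resulting bipartite graph $G'$ both bipartite and $k_d$-connected. By induction $G'$ is $d$-birigid, and re-applying the forward extension recovers $d$-birigidity of $G$, the desired contradiction. The main obstacle will be this last step: one must produce an executable reduction that simultaneously respects the bipartition (any edge added by an inverse $1$-extension must go between $A$ and $B$, so the available targets are more restricted than in the non-bipartite case) and preserves $k_d$-connectivity, and this is precisely where both the new vertex cover bound and the $O(d^3)$ scaling of $k_d$ are needed to guarantee enough structural slack.
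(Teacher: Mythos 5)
Your proposal has two genuine gaps, and they are both at the heart of the matter.

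The first is your claim that a bipartite single $1$-extension (adding one new vertex of degree $d+1$ while removing one old edge) preserves $\mathcal{B}_d$-independence. It does not, and this is precisely the obstacle the paper identifies as separating the birigidity problem from the rigidity problem treated by Krivelevich--Lew--Michaeli and Vill\'anyi. In $\mathcal{R}_d$ the row indexed by $uv$ has entries $p(u)-p(v)$ and $p(v)-p(u)$, so after a $1$-extension at $xy$ one can specialise the new vertex to sit at $p(x)$ and the new row for $xv$ collapses to zero; in $\mathcal{B}_d$ (and $\mathcal{S}_d$, $\mathcal{H}_d$) the row for $uv$ carries $p(v)$ in the $u$-block and $\pm p(u)$ in the $v$-block, so this specialisation produces a nonzero dependent row rather than a zero row, and the argument fails. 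The paper gets around this by introducing the \emph{double} $1$-extension, which adds \emph{two} new vertices coincident with the two ends of the removed edge so that a four-row circuit appears, and this requires a structurally different induction.

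The second gap is in your final step: you propose to ``locate a vertex $v\in A$ on which an inverse $0$- or $1$-extension may be performed while keeping the resulting graph $k_d$-connected,'' but by that point you have already reduced to a \emph{critically} connected counterexample, so no single vertex deletion preserves connectivity --- the search you describe cannot succeed. The paper instead works with a strengthened bipartite notion ($k$-biconnectivity), uses the vertex cover bound only indirectly (via a Chernoff-bound argument to produce a small $\mathcal{M}$-seed, not to locate low-degree vertices), applies a double $1$-extension reduction to delete a \emph{pair} of vertices $u,v$ outside the seed, and then derives a contradiction by showing that the resulting clique of $\mathcal{M}$-linked neighbours of $u$ and $v$ forces $G-v$ to remain $k$-biconnected, violating criticality. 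In short: the operation you rely on does not preserve independence, and the contradiction you aim for is not reachable; both are replaced in the paper by genuinely new machinery (seeds, double $1$-extensions, $k$-biconnectivity) that your sketch does not anticipate.
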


\noindent
We  will construct examples of $d^2$-connected bipartite graphs which are not $d$-birigid in \cref{subsection:concludingbipartite}. Thus the connectivity condition in \cref{thm:main} is within a factor of $d$ of being best possible.

The reader may wonder whether every sufficiently highly connected graph is \locallycompletable{d} or $d$-hyperconnected. This is not true in general: for any bipartite graph $G$ on $n$ vertices and with vertex classes of size at least $d$ we have $\rank \cS_d(G) =\rank \cH_d(G)=\rank \cB_d(G)\leq dn - d^2$, and hence such a graph cannot be \locallycompletable{d} or $d$-hyperconnected.

\subsection{Unique realisability and matrix completion}

We can use \cref{thm:mindegree,thm:main} to deduce the following result about unique completability of low rank matrices. Let $M_d(m,n)$ and $S_d(n)$ denote the set of real $n \times m$ matrices of rank $d$, and the set of 
positive semidefinite
$n \times n$ matrices of rank $d$, respectively. Each $M\in  M_d(m,n)$  can be factored as $M=A^TB$ for some $d\times m$ matrix $A$ and some $d\times n$ matrix $B$, and we say that $M$ is {\em generic (in $M_d(m,n)$) } if the multiset containing all entries in $A$ and $B$ is algebraically independent over $\rat$ for some factorisation $M=A^TB$. 
Similarly, each $S\in  S_d(n)$  can be factored as $M=A^TA$ for some $d\times n$ matrix $A$, and we say that $S$ is {\em generic (in $S_d(n)$)} if the multiset containing all entries in $A$  is algebraically independent over $\rat$ for some factorisation $S=A^TA$.

Let $s_d,k_d$ be the constants defined in \cref{thm:mindegree,thm:main}, respectively.

\begin{theorem}\label{thm:matrix} Let $d\geq 1$ be an integer.
    \begin{enumerate}
        \item Every generic matrix in $S_d(n)$  is uniquely determined by any subset of its entries which includes at least $(n + d + 1)/2$ entries from each row.
        \item Every generic matrix in $M_d(m,n)$ is uniquely determined by any subset of its entries that defines a $(k_d+1)$-connected spanning subgraph of $K_{m,n}$.
    \end{enumerate}
\end{theorem}

To derive \cref{thm:matrix}, we consider the following ``global'' versions of $d$-completability and $d$-birigidity.
We say that a semisimple graph $G = (V,E)$ is \emph{globally $d$-completable} if, for every generic realisation $p:V \to \R^d$ and every realisation $q: V \to \R^d$ satisfying $p(u) \cdot p(v) = q(u) \cdot q(v)$ for all $uv \in E$, the equality $p(u) \cdot p(v) = q(u) \cdot q(v)$ holds for all $u,v \in V$. Then $G$ is globally $d$-completable if and only if every generic matrix in $S_d(|V|)$ is uniquely determined by its entries corresponding to the edges of $G$. This follows from the definition by labeling the vertices of $G$ as $V = \{v_1,\ldots,v_n\}$, and then associating to each generic matrix $M = A^TA \in S_d(|V|)$ the generic realisation $p$ of $G$ given by letting $p(v_i)$ be the $i$'th row of $A$.

Similarly, when $G$ is bipartite with bipartition $V=X\sqcup Y$, we say that $G$ is \emph{globally $d$-birigid} if, for every generic realisation $p:V \to \R^d$ and every realisation $q: V \to \R^d$ satisfying $p(u) \cdot p(v) = q(u) \cdot q(v)$ for all $uv \in E$, the equality $p(u) \cdot p(v) = q(u) \cdot q(v)$ holds for all $u \in X,v \in Y$. Then $G$ is globally $d$-birigid if and only if every generic matrix in $M_d(|X|,|Y|)$ is uniquely determined by its entries corresponding to the edges of $G$. This follows from the definition by labeling the vertices of $G$ as $X = \{x_1,\ldots,x_m\}$ and $Y = \{y_1,\ldots,y_n\}$, and then associating to each generic matrix $M = A^TB \in M_d(|X|,|Y|)$ the realisation $p$ given by letting $p(x_i)$ be the $i$'th row of $A$, and $p(y_j)$ be the $j$'th row of $B$. %

Further details on global $d$-completability, global $d$-birigidty  and their links to matrix completion can be found in \cite{SC,JJT2}.
In particular,
Singer and Cucuringu \cite{SC} show that $G$ is globally $1$-completable if and only if $G$ is a connected semisimple graph which contains at least one odd cycle, and $G$ is globally $1$-birigid if and only if $G$ is a connected bipartite graph. No characterisations are known for $d\geq 2$. 

The properties of being $d$-completable and $d$-birigid are necessary conditions for a graph to be globally $d$-completable and globally $d$-birigid, respectively. They also give rise to the following sufficient conditions.

\begin{lemma}\label{thm:redundant} Suppose $d\geq 2$ is an integer and $G=(V,E)$ is a graph.\begin{enumerate}
\item If $G-v$ is  $d$-completable for all $v\in V$, then $G$ is globally $d$-completable.
\item If $G$ is  bipartite and $G-v$ is $d$-birigid for all $v\in V$, then $G$ is globally $d$-birigid.
\end{enumerate}
\end{lemma}

\noindent
\cref{thm:redundant}(a) is given in \cite[Theorem 31]{JJT2}. \cref{thm:redundant}(b) is a new result which we prove in \cref{sec:red}.
The characterisations of global completability and birigidity when $d=1$, and a combination of \cref{thm:mindegree,thm:main,thm:redundant} when $d\geq 2$, imply the following result.

\begin{theorem} \label{thm:glob} Let $d\geq 1$ be an integer. 
\begin{enumerate}[(a)] \itemsep0pt
\item Every semisimple graph $G$ on $n \geq s_d+1$ vertices with the property that $\delta(G) \geq (n+d)/2$ and all vertices which are not incident with a loop have degree at least $ (n+d+1)/2$ is  globally $d$-completable.
\item Every $(k_d+1)$-connected bipartite graph  is globally $d$-birigid.
\end{enumerate}
\end{theorem}

\noindent
\cref{thm:glob}, in turn, immediately implies \cref{thm:matrix}.

\noindent

\subsection{Completability and rigidity}
We close this introductory section by describing how \cref{thm:mindegree,thm:main} relate to recent results on the $d$-dimensional rigidity matroid of a graph.
Given a %
simple graph $G=(V,E)$ and a map $p:V\to \R^d$,  
the {\em rigidity  matrix} of $(G,p)$ is 
the $|E|\times d|V|$ matrix $R(G,p)$ with rows indexed by $E$ and sets of $d$ consecutive columns indexed by $V$, in which the row indexed by an edge $uv\in E$ is 
\vspace{-.5em}
\[ 
\kbordermatrix{
& &  u & & v & \\
e=uv & 0 \dots 0 & p(u)-p(v) & 0\dots 0 & p(v)-p(u) & 0\dots 0
}.
\]
The {\em $d$-dimensional  rigidity  matroid} of $G$, denoted by $\cR_d(G)$, is the row matroid of $R(G,p)$ for any generic $p$.

A graph $G\subseteq K_n$ is said to be {\em $d$-rigid} if its edge set spans $\cR_d(K_n)$. By \cite[Corollary 2.6]{JJT}, a graph $G$ is $d$-rigid if and only if the semisimple graph $G^\circ$ obtained by adding a loop at each vertex of $G$ is $(d+1)$-completable. Thus any characterisation of $\cS_{d+1}(K_n^\circ)$ would give a characterisation of $\cR_{d}(K_n)$.

Krivelevich, Lew and Michaeli~\cite{KLM} and Vill\'anyi~\cite{vill} have recently used the probabilistic method to obtain analogous results to \cref{thm:mindegree,thm:main} for $d$-rigidity: Krivelevich et al.\ showed that every sufficiently large graph with minimum degree at least $(n+d-2)/2$ is $d$-rigid; and Vill\'anyi showed that every $d(d+1)$-connected graph is $d$-rigid. (Note that the result of Krivelevich et al.\ can also be deduced from \cref{thm:mindegree}(b) by using the above-mentioned link between $\cS_{d+1}(K_n^\circ)$  and $\cR_d(K_n)$.)
The key difference between our setting and that of \cite{KLM,vill} is that the so-called 1-extension operation preserves independence in $\cR_d(K_n)$, but does not preserve independence in the matroids we work with. Instead, we have to rely on the double 1-extension and looped 1-extension operations defined in the next section, and this requires significant new ideas. In particular, to prove \cref{thm:main}, we introduce a new variant of vertex-connectivity for bipartite graphs, which we call $k$-biconnectivity, and we give a lower bound on the vertex cover number in critically $k$-biconnected bipartite graphs (\cref{lem:tau_bound}), as well as in critically $k$-connected graphs (\cref{lem:tau_bound2}).

\section{Terminology and preliminary results}
Henceforth, we will assume that $d$ is a fixed positive integer.
We will use the following terminology throughout this paper for the four families of matroids $\cS_d(K_n^\circ)$, $\cH_d(K_n)$, $\cR_d(K_n)$ and $\cB_d(K_{m,n})$.
Let $G_0=(V_0,E_0)$ be a semisimple graph and $\cM$ be a matroid defined on $E_0$ with rank function $r$.
We say that a subgraph $G=(V,E)$ of $G_0$ is \emph{$\cM$-independent} if $r(E)=|E|$, and that a subgraph $G'=(V',E')$ of $G$ is an \emph{$\cM$-basis of $G$} if $G'$ is $\cM$-independent and $r(E') = r(E)$.  For vertices $u,v \in V$ with $uv\in E_0$ (possibly $u = v$), we say that $\{u,v\}$ is \emph{$\cM$-linked in $G$} if $r(E) = r(E+uv)$. The graph $G$ is {\em $\cM$-closed} if all $\cM$-linked vertex pairs in $G$ are edges of $G$. This is equivalent to saying that $E$ is a closed set in $\cM$.

\medskip
We will also use the following three operations defined on a semisimple graph $G=(V,E)$.
\begin{itemize}
    \item The \emph{($d$-dimensional) $0$-extension operation} constructs a new graph $H$ from $G$ by adding a new vertex $v$  and joining $v$ to $d$ vertices $v_1, \dots, v_d\in V+v$ (adding a loop $vv$ when $v\in \{v_1,v_2,\ldots,v_d\}$). We will refer to the special case of this operation that does not add a loop at $v$ as a {\em simple 0-extension}.
    \item The \emph{($d$-dimensional) double $1$-extension operation} on an edge $xy\in E$ constructs a new graph $H$ by adding two new vertices $u,v$  to $G-xy$, joining $u$ to a set of $d$ vertices in $V+u$ which includes $x$ (and may include $u$), joining $v$ to a set of $d$ vertices in $V+v$ which includes $y$ (and may include $v$), and finally adding the edge $uv$. We allow the possibility that $x=y$. We will refer to the special case of this operation that does not add a loop at $u$ or $v$ as a {\em simple double 1-extension}.
    \item The \emph{($d$-dimensional) looped $1$-extension operation}
    on a non-loop edge $xy\in E$ 
    constructs a new graph $H$ by adding a new vertex $v$ to $G-xy$, joining $v$ to a set of $d$ vertices in $V$ which includes $x$ and $y$, and adding the loop $vv$. In this operation $x=y$ is not allowed.
\end{itemize}

The first part of the following lemma is given in~\cite[Lemmas 2.3, 4.1]{JJT} and~\cite[Corollary 30]{JJT2}. The proof of the second part is similar, but we include it for completeness.

\begin{lemma}\label{lem:ext}
\begin{enumerate}[(a)] \itemsep0pt
    \item The $d$-dimensional $0$-extension, double $1$-extension and looped 1-extension operations preserve the property of being $\symcompletion{d}$-independent ($d$-completable, respectively).
    \item The $d$-dimensional simple $0$-extension and simple double $1$-extension operations preserve the property of being $\cH_{d}$-independent ($d$-hyperconnected, respectively).
\end{enumerate}
\end{lemma}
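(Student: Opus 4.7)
The plan is to prove Lemma~\ref{lem:ext}(b) by directly adapting the matrix-manipulation argument for Lemma~\ref{lem:ext}(a). For the simple 0-extension at a new vertex $v$ with $d$ non-loop edges to $w_1, \ldots, w_d$, I would observe that the $d$ new rows of $H(\cdot, q)$ have zero entries in every old column block, and in the $v$-column block they form the $d \times d$ matrix $-[p(w_1)\mid\cdots\mid p(w_d)]^T$, which is invertible by genericity of $p$. Since old rows are zero in the $v$-block, the new rows are linearly independent of each other and of the old rows, preserving $\cH_d$-independence. For $d$-hyperconnectivity, the operation adds exactly $d$ to the rank, matching the increase of $d$ in the maximum rank $\rank \cH_d(K_{n+1}) - \rank \cH_d(K_n)$, so the property is preserved.

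For the simple double 1-extension on an edge $xy$, with new vertices $u, v$ and neighbour sets $N(u) = \{x = w_1, w_2, \ldots, w_d\}$ and $N(v) = \{y = w'_1, w'_2, \ldots, w'_d\}$, plus the edge $uv$, I would first apply the 0-extension argument twice (first adding $u$ with edges $E_u$, then $v$ with edges $E_v$) to conclude that $G + E_u + E_v$ is $\cH_d$-independent, and hence so is $(G - xy) + E_u + E_v$. The remaining step is to show that the $uv$-row is not in the span of the other rows of $H(H, q)$. Using that the $d\times d$ sub-matrices $-[p(w_i)^T]$ and $-[p(w'_j)^T]$ in the $u$- and $v$-column blocks are invertible, I would perform row operations to clear the $u$- and $v$-column entries of the $uv$-row, reducing it modulo the $E_u$- and $E_v$-rows to a vector of the form $(\vec{t}, 0, 0)$ where $\vec{t}$ is supported on the $w_i$- and $w'_j$-blocks with explicit entries $\vec{t}_{w_i} = (M_u^{-1} q(v))_i\cdot q(u)$ and $\vec{t}_{w'_j} = -(M_v^{-1} q(u))_j \cdot q(v)$, with $M_u = [p(w_1)\mid\cdots\mid p(w_d)]$ and $M_v$ defined analogously. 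Independence of $H$ is then equivalent to $\vec{t} \notin \mathrm{rowspan}(M_0)$, where $M_0 := H(G-xy, p)$.

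The crux, which I expect to be the main obstacle, is establishing this non-containment by a genericity argument. Since $q(u), q(v)$ are algebraically independent of $p$ over $\rat$, the containment $\vec{t} \in \mathrm{rowspan}(M_0)$ would force the bilinear form $\vec{t}\cdot\vec{y}$ to vanish as a polynomial in $(q(u), q(v))$ for every $\vec{y} \in \ker(M_0^T)$; this in turn imposes a linear identity between the $d\times d$ matrices $Y_u M_u^{-1}$ and $(Y_v M_v^{-1})^T$ formed from the $w_i$- and $w'_j$-blocks of $\vec{y}$. The plan is then to produce a specific $\vec{y} \in \ker(M_0^T)$ violating this identity: when some $w_i$ or $w'_j$ has fewer than $d$ neighbours in $G - xy$, one can take $\vec{y}$ concentrated on that vertex's block with value in the orthogonal complement of its neighbours' $p$-values, and a direct calculation shows the identity fails; the remaining cases require a more global polynomial argument exploiting algebraic independence of the generic parameters, reducing to verification on a convenient specialisation of $p$. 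Once the non-containment is proved, the $d$-hyperconnectivity statement again follows by a rank count: the operation adds $2d$ to the rank and introduces two new vertices, which together raise the maximum rank in $\cH_d(K_{n+2})$ by exactly $2d$.
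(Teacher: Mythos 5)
Your $0$-extension argument agrees in essence with the paper's, which exhibits the same block-triangular decomposition $H(G',p)=\bigl(\begin{smallmatrix}A&*\\0&B\end{smallmatrix}\bigr)$ with $A$ a generic invertible $d\times d$ block. (One minor slip: the new rows are \emph{not} zero in the old column blocks --- they carry $\pm q(v)$ there --- but this doesn't matter, since the triangular structure from the old rows being zero in the $v$-block already suffices.)

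For the simple double $1$-extension there is a genuine gap. You correctly reduce the problem to showing that the $uv$-row lies outside the row span of the remaining $|E'|-1$ rows, but you then explicitly flag this non-containment as ``the crux, which I expect to be the main obstacle,'' and the sketch you offer --- a bilinear-form identity, a case split on low-degree vertices, and ``a more global polynomial argument'' at an unspecified specialisation --- is not carried out. The hard part of the lemma has been relocated, not resolved.

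The paper sidesteps this entirely with a short trick. Rather than argue at a generic point, it evaluates at the \emph{non-generic} realisation $p'$ of $G'+xy$ that places the new vertices on top of old ones: $p'(x')=p(x)$, $p'(y')=p(y)$, $p'|_V=p$. Under this specialisation the four rows indexed by $xy,\,xy',\,x'y,\,x'y'$ satisfy the single dependency $R_{xy}-R_{xy'}-R_{x'y}+R_{x'y'}=0$, with every proper subset independent --- that is, they form a circuit. Since $G'-x'y'$ arises from $G$ by two simple $0$-extensions, the first part gives $\rank H(G'+xy-x'y',p')=\rank H(G,p)+2d$, and the circuit structure then forces $\rank H(G',p')=\rank H(G'+xy,p')\geq\rank H(G,p)+2d=|E'|$. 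As $H(G',p')$ has exactly $|E'|$ rows, they are linearly independent at $p'$, and independence persists under generic perturbation, so $G'$ is $\cH_d$-independent. Your instinct that one should ``reduce to verification on a convenient specialisation of $p$'' is exactly right, but the content of the lemma lies in identifying the specialisation $p'(x')=p(x)$, $p'(y')=p(y)$ and recognising the circuit it produces; neither step appears in your proposal. Your final rank count deducing preservation of $d$-hyperconnectedness from preservation of $\cH_d$-independence is correct and matches the paper.
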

\begin{proof}[Proof of (b)] We first suppose that $G=(V,E)$ is a $\cH_d$-independent simple graph and that $G'=(V+v,E')$ is obtained from $G$ by a simple $0$-extension operation which adds a new vertex $v$ and new edges $vv_1,vv_2,\ldots,vv_d$. Let $p:V+v\to \R^d$ be a generic realisation of $G'$. Then
$H(G',p)=
    \begin{pmatrix}
        A&*\\
        0&B
    \end{pmatrix}$
    where  $A$ is the $d\times d$ matrix with rows $p(v_1),\ldots,p(v_d)$ and $B=H(G,p|_V)$. Hence $\rank H(G',p)=\rank A+\rank H(G,p|_V)=d+|E| =  |E'|$, so $G'$ is $\cH_{d}$-independent.
    
    We next suppose that $G=(V,E)$ is an $\cH_d$-independent simple graph and $G'=(V+v,E')$ is obtained from $G$ by a simple double $1$-extension operation which deletes an edge $xy$ and adds two new vertices $x',y'$ and the new edge $x'y'$; $d$ new edges from $x'$ to a set of $d$ vertices in $V$ including $y$; and $d$ new edges from $y'$ to a set of $d$ vertices in $V$ including $x$. Let $p:V\to \R^d$ be a generic realisation of $G$. Consider the (non-generic) realisation $p':V+x'+y'\to \R^d$ of $G'+xy$ obtained by putting $p'(x')=p(x)$, $p'(y')=p(y)$ and $p'(v)=p(v)$ for all $v\in V$. The graph $G'-x'y'$ can be obtained from $G$ by two simple 0-extensions and, since $p$ is generic, we may use the argument in the previous paragraph to deduce that $\rank H(G'+xy-x'y',p')=\rank H(G,p)+2d$. This implies that  $\rank H(G'+xy,p)\geq \rank H(G,p)+2d$. On the other hand, the rows of $H(G'+xy,p)$ indexed by the edges $xy,xy',x'y,x'y'$ have the form
\[
    \kbordermatrix{
    &  x &  x'&  y  & y' &  & & \\
    xy & p(y) & 0 & -p(x) & 0 & 0 &\dots &0 \\
    xy' & p(y) & 0 & 0 & -p(x) & 0 &\dots &0 \\ 
    x'y & 0 & p(y) & -p(x) & 0 & 0 &\dots &0 \\ 
    x'y' & 0 & p(y) & 0 & -p(x) & 0 &\dots &0 
    } \,.
\]
    It is straightforward to check that these rows are a circuit in the row matroid of $H(G'+xy,p')$ and hence $\rank H(G',p')=\rank H(G'+xy,p')\geq \rank H(G,p)+2d=|E'|$. This implies that the rows of $ H(G',p')$ are linearly independent. The same will be true for any generic realisation of $G'$ and hence $G'$ is  $\cH_d$-independent.

    That both operations preserve the property of being $d$-hyperconnected follows immediately from the fact that the latter is equivalent to the existence of an $\cH_d$-independent subgraph on $d|V| - \binom{d+1}{2}$ edges (when $|V| \geq d$).
\end{proof}

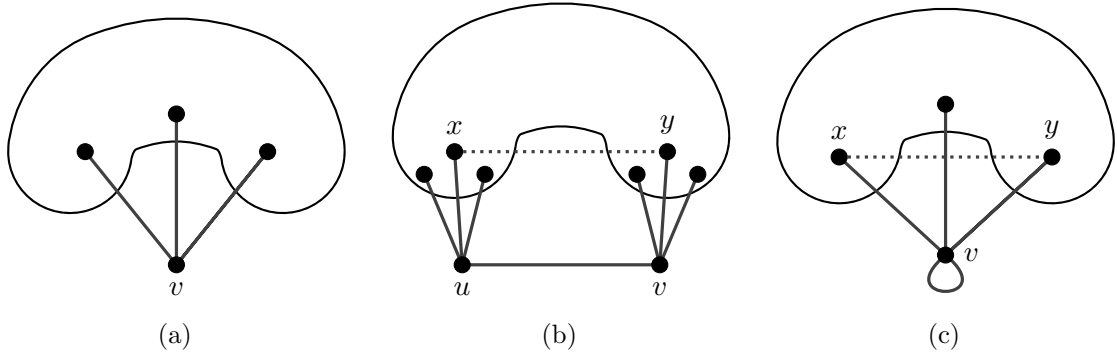
\begin{figure}[t]
    \centering
    \begin{subfigure}[b]{0.32\linewidth}
        \centering
        \begin{tikzpicture}[x = 1cm, y = 1cm, scale = 1]
            
            \node (b) at (-1.4,1){};
            \node (c) at (-0.8,1.8){};
            \node (d) at (0.8,1.8){};
            \node (e) at (1.4,1){};
            
            \node (v) at (0,-.5){};
            
            \node (x) at (-1.2,1){};
            \node (y) at (1.2,1){};
            \node (z) at (0,1.5){};
            
            \draw[line width=0.8,double distance=16mm,smooth,line cap=round,tension=.9] plot coordinates { (b) (c) (d) (e)};
            
            \draw [line width=\normaledge,color=edgeblack] (x.center) -- (v.center) -- (y.center) -- (v.center) -- (z.center);
            \draw [fill=vertexblack] (v) circle (3pt) node[below=3pt]{$v$};
            \draw [fill=vertexblack] (x) circle (3pt);
            \draw [fill=vertexblack] (y) circle (3pt);
            \draw [fill=vertexblack] (z) circle (3pt);

        \end{tikzpicture}
        \caption{}
    \end{subfigure}
    \begin{subfigure}[b]{0.32\linewidth}
        \centering
        \begin{tikzpicture}[x = 1cm, y = 1cm, scale = 1]
            
            \node (b) at (-1.4,1){};
            \node (c) at (-0.8,1.8){};
            \node (d) at (0.8,1.8){};
            \node (e) at (1.4,1){};

            \node (x) at (-1.4,.8){};
            \node (y) at (1.4,0.8){};
            
            \node (x') at (-1.3,-0.7){};
            \node (y') at (1.3,-0.7){};
            
            \draw[line width=0.8,double distance=16mm,smooth,line cap=round,tension=.9] plot coordinates { (b) (c) (d) (e)};
            
            \draw [line width=\normaledge, color=edgeblack, dotted] (x.center) to (y.center);
            \draw [line width=\normaledge,color=edgeblack] (x.center) -- (x'.center) -- (y'.center) -- (y.center);
            
            \node (a1) at (-1.8,.5){};
            \node (a2) at (-1,.5){};
            \node (b1) at (1.8,.5){};
            \node (b2) at (1,.5){};
            
            \draw [line width=\normaledge,color=edgeblack] (a1.center) -- (x'.center) -- (a2.center);
            \draw [line width=\normaledge,color=edgeblack] (b1.center) -- (y'.center) -- (b2.center);

            \draw [fill=vertexblack] (x) circle (3pt) node[above=3pt]{$x$};
            \draw [fill=vertexblack] (y) circle (3pt) node[above=3pt]{$y$};
            \draw [fill=vertexblack] (x') circle (3pt) node[below=3pt]{$u$};
            \draw [fill=vertexblack] (y') circle (3pt) node[below=3pt]{$v$};
            \draw [fill=vertexblack] (a1) circle (3pt);
            \draw [fill=vertexblack] (a2) circle (3pt);
            \draw [fill=vertexblack] (b1) circle (3pt);
            \draw [fill=vertexblack] (b2) circle (3pt);
            
        \end{tikzpicture}
        \caption{}
    \end{subfigure}
    \begin{subfigure}[b]{0.32\linewidth}
        \centering
        \begin{tikzpicture}[x = 1cm, y = 1cm, scale = 1]
            
            \node (b) at (-1.4,1){};
            \node (c) at (-0.8,1.8){};
            \node (d) at (0.8,1.8){};
            \node (e) at (1.4,1){};

            \node (x) at (-1.4,.8){};
            \node (y) at (1.4,0.8){};
            \node (z) at (0,1.5){};
            
            \node (x') at (-1.3,-0.7){};
            \node (y') at (1.3,-0.7){};

            \draw[line width=0.8,double distance=16mm,smooth,line cap=round,tension=.9] plot coordinates { (b) (c) (d) (e)};
            
            \draw [line width=\normaledge, color=edgeblack, dotted] (x.center) to (y.center);

            \draw [line width=\normaledge,color=edgeblack] (x.center) -- (v.center) -- (y.center) -- (v.center) -- (z.center);
            
            \draw [line width=\normaledge,color=edgeblack] (v.center)  to[min distance=10mm,in=-40,out=-140] (v.center);
            
            \draw [fill=vertexblack] (v) circle (3pt) node[right=3pt]{$v$};
            \draw [fill=vertexblack] (x) circle (3pt) node[above=3pt]{$x$};
            \draw [fill=vertexblack] (y) circle (3pt) node[above=3pt]{$y$};
            \draw [fill=vertexblack] (z) circle (3pt);
        \end{tikzpicture}
        \caption{}
    \end{subfigure}
    \caption{Examples of \emph{(a)} a (simple) 0-extension, \emph{(b)} a (simple) double 1-extension, and \emph{(c)} a looped 1-extension in the $d=3$ case.}\label{fig:operations}
\end{figure}

\section{Matroid seeds and their basic properties}

Throughout this section, we will assume that 
$G_0=(V_0,E_0)$ is a semisimple graph,  $\cM$ is a matroid on $E_0$ with rank function $r$ and $G=(V,E)$ is a subgraph of $G_0$. We will often denote $r(E)$ by $r(G)$ in order to simplify notation. We say that $\cM$ has the {\em $d$-dimensional 0-extension property} if for all subgraphs $G_1,G_2$ of $G_0$ such that $G_1$ is $\cM$-independent and $G_2$ is a $d$-dimensional 0-extension of $G_1$, the graph $G_2$ is $\cM$-independent.

    We next introduce the main technical tool that we will use in our proofs of \cref{thm:mindegree,thm:main}. 
A subset $K\subseteq V$ is an \emph{$\cM$-seed of $G$ (with respect to $d$)} if
\begin{itemize} \itemsep0pt
    \item $r(G)=r(G[K])+d|V - K|$, and
    \item for every $K\subseteq K'\subsetneq V$, there is a vertex $x\in V-K'$ such that $|(K'+x)\cap N_G(x)|\geq d$, where $N_G(x)$ denotes the set of vertices of $G$ which are adjacent to $x$.  
\end{itemize}
We have the following characterisation of $\cM$-seeds in the case when $\cM$ has the $d$-dimensional 0-extension property.
\begin{lemma}\label{lem:seed:altdef}
    Suppose that $\cM$ has
    the $d$-dimensional $0$-extension property. 
    \begin{enumerate}[(a)]
        \item A subset $K \subseteq V$ is an $\cM$-seed of $G$ if and only if there is an $\cM$-independent subgraph $I_K = (K,F)$ of $G[K]$ and an $\cM$-basis $B_G$ of $G$ such that $B_G$ can be obtained from $I_K$ by a series of $d$-dimensional $0$-extensions.
        \item If $K \subseteq V$ is an $\cM$-seed of $G$ and $v\in V-K$ satisfies $|(K+v)\cap N_G(v)|\geq d$, then $K+v$ is also an $\cM$-seed.
    \end{enumerate}
\end{lemma}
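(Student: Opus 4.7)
The plan is to treat the two parts in order, with (a) supplying the main characterisation and (b) then following by a short rank computation.

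For the ``if'' direction of (a), I would suppose that $B_G$ is obtained from $I_K$ by $0$-extensions in the order $x_1,\dots,x_k$, so that $\{x_1,\dots,x_k\}=V-K$ and each $x_i$ is joined in $B_G$ by $d$ edges to the current vertex set $V_{i-1}:=K\cup\{x_1,\dots,x_{i-1}\}$. The rank equation splits into two bounds: the upper bound follows from $|B_G|=|I_K|+dk\leq r(G[K])+d|V-K|$ together with $r(G)=|B_G|$; for the lower bound, I would replace $I_K$ by an arbitrary $\cM$-basis $J$ of $G[K]$ and perform the same sequence of $0$-extensions, which by the $d$-dimensional $0$-extension property yields an $\cM$-independent subgraph of $G$ of size $r(G[K])+d|V-K|$. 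The second seed condition is then verified by choosing, for a given $K\subseteq K'\subsetneq V$, the smallest index $i$ with $x_i\notin K'$; the $d$ $B_G$-neighbours of $x_i$ lie in $V_{i-1}+x_i\subseteq K'+x_i$, hence in $(K'+x_i)\cap N_G(x_i)$.

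For the ``only if'' direction of (a), I would start from any $\cM$-basis $I_K$ of $G[K]$ and use the second seed condition iteratively to build a chain $K=V_0\subsetneq V_1\subsetneq\dots\subsetneq V_k=V$: at each step the condition supplies a vertex $x_i\in V-V_{i-1}$ with $|(V_{i-1}+x_i)\cap N_G(x_i)|\geq d$, and I would pick $d$ such neighbours and perform the corresponding $0$-extension, which preserves $\cM$-independence by assumption. After $k=|V-K|$ steps this yields an $\cM$-independent subgraph of $G$ of size $r(G[K])+d|V-K|=r(G)$ (by the first seed condition), hence an $\cM$-basis of $G$ obtained from $I_K$ by $0$-extensions.

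For (b), condition 2 for $K+v$ is immediate: every $K+v\subseteq K'\subsetneq V$ contains $K$, so the condition for $K$ supplies a suitable vertex in $V-K'$. For condition 1, I would establish two inequalities and squeeze. On the one hand, $r(G[K+v])\geq r(G[K])+d$ by taking an $\cM$-basis of $G[K]$ and $0$-extending by $v$ using its (at least) $d$ neighbours in $K+v$, whose corresponding edges lie in $G[K+v]$. On the other hand, starting from an $\cM$-basis of $G[K+v]$ and running the iterative $0$-extension construction from the ``only if'' direction of (a) (legitimate since the second seed condition for $K$ applies to every intermediate set, each of which contains $K$) yields an $\cM$-independent subgraph of $G$ of size $r(G[K+v])+d|V-K-v|$, so $r(G)\geq r(G[K+v])+d|V-K-v|$. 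Combining with the first seed condition for $K$ then gives
\[
r(G)\;\geq\;r(G[K+v])+d|V-K-v|\;\geq\;r(G[K])+d+d|V-K-v|\;=\;r(G[K])+d|V-K|\;=\;r(G),
\]
forcing equality throughout and delivering condition 1 for $K+v$. The only place where genuine care is needed is the lower bound in part (a)'s rank identity: one must avoid implicitly reusing $I_K$ there (it need not be a basis of $G[K]$) and instead feed a true basis $J$ of $G[K]$ into the $0$-extension sequence; everything else is straightforward bookkeeping.
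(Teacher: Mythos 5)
Your proof is correct and follows essentially the same route as the paper: both directions of (a) use the same ordering construction and rank-squeeze, with the lower bound in the ``if'' direction obtained (as the paper leaves implicit) by re-running the same $0$-extension sequence starting from an actual $\cM$-basis of $G[K]$. For (b), the paper simply asserts it follows from (a) (choose the first vertex of the ordering in the ``only if'' construction to be $v$, which is legal by hypothesis); your explicit two-inequality squeeze is a correct and slightly more self-contained way to say the same thing.
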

\begin{proof} \emph{(a)}
    To prove necessity, let us suppose that $K$ is an $\cM$-seed of $G$. Choose an $\cM$-basis $B_K$ of $G[K]$.  Since $K$ is an $\cM$-seed, there is an ordering $v_1,\ldots,v_k$ of $V-K$ such that 
    $|(K \cup \{v_1,\ldots,v_{i}\}) \cap N_G(v_i)| \geq d$,
    for all $i \in \{1,\ldots,k\}$. Since $\cM$ has the $0$-extension property, we can construct an $\cM$-independent subgraph $B_G$ of $G$ from $B_K$ by a series of 0-extensions using $v_1,v_2,\ldots,v_k$. Since \[r(B_G) = |E(B_G)| = |E(B_K)| + dk = r(G[K]) + d|V - K| = r(G),\] $B_G$ is an $\cM$-basis of $G$, as desired.
    
    To prove sufficiency, we suppose that an $\cM$-basis $B_G$ of $G$ can be obtained from an $\cM$-independent subgraph $I_K$ of $G[K]$ by a series of 0-extensions. Let $v_1,\ldots,v_k$ be the ordering of $V-K$ along which we perform these 0-extensions. We have 
    \[r(G) = |E(B_G)| = |E(I_K)| + d|V-K| \leq r(G[K]) + d|V-K| \leq r(G),\] 
    where the last inequality follows from 
    the hypothesis that $\cM$ has the 0-extension property. Hence $r(G)=r(G[K])+d|V - K|$. In addition, if $K \subseteq K' \subsetneq V$, then the construction of $B_G$ implies that $|(K'+v_i)\cap N_G(v_i)|\geq d$, where $i \in \{1,\ldots,k\}$ is the smallest index such that $v_i \notin K'$. This implies that $K$ is an $\cM$-seed and completes the proof of \emph{(a)}.

    \emph{(b)} This follows immediately from part \emph{(a)}.
\end{proof}

We prove three more lemmas in this section. The first,
\cref{lem:small_kernel}, provides an upper bound on the size of the smallest $\cM$-seed of a semisimple graph $G=(V,E)$. We will use it to obtain a sufficient condition for $G$ to have 
a (small) $\cM$-seed $K$. %
The second, \cref{lemma:deletablevertices}, shows that if $G$ has an $\cM$-seed 
which does not cover the edges of $G$,
then we can find a pair of vertices $u,v$ whose deletion only decreases the rank of $G$ by a small amount. The third, \cref{lem:neighbours:clique}, guarantees that the neighbour sets of such $u$ and $v$ induce a dense subgraph of $G$ when $G$ is $\cM$-closed and $\cM$ satisfies certain additional properties. The existence of these dense subgraphs will form the basis of our arguments in later sections.

\begin{lemma}\label{lem:small_kernel}
    Suppose that $\cM$ has
    the $d$-dimensional $0$-extension property and has
    rank at most $dn$ for some $d\geq 2$. Let $t\geq 0$ be an integer and let $X_0\subseteq X_1\subseteq\dots\subseteq X_t=V$ be such that, for all $1\leq i\leq t$ and $v\in X_i-X_{i-1}$, we have $|N_G(v)\cap X_{i-1}|\geq d$.
    Then $G$ has an $\cM$-seed $K$ with \[|K|\leq 2|X_0|\frac{d^{t+1}}{d-1}.\]
\end{lemma}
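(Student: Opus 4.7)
My plan is to proceed by induction on $t$.

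For the base case $t = 0$, we have $X_0 = V$, so $K = V$ is trivially an $\cM$-seed (the rank identity becomes $r(G) = r(G[V]) + d \cdot 0$, and the second defining property is vacuous since $K' = V$ is excluded). Since $d \geq 2$, the bound $|K| = |V| = |X_0| \leq 2|X_0| \cdot d/(d-1)$ holds.

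For the inductive step $t \geq 1$, the natural strategy is to reduce to a chain of length $t - 1$ with a slightly larger initial set. Concretely, the plan is to construct a set $Y_0$ with $X_0 \subseteq Y_0 \subseteq X_1$ and $|Y_0| \leq d|X_0|$ such that the shortened chain $Y_0 \subseteq X_2 \subseteq \dots \subseteq X_t = V$ (of length $t-1$) still satisfies the expansion hypothesis: every $v \in X_2 - Y_0$ has at least $d$ neighbors in $Y_0$. Applying the induction hypothesis to this chain would then yield an $\cM$-seed $K$ with
\[
|K| \leq 2|Y_0| \cdot \frac{d^t}{d-1} \leq 2 \cdot d|X_0| \cdot \frac{d^t}{d-1} = 2|X_0| \cdot \frac{d^{t+1}}{d-1},
\]
exactly as required.

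The heart of the proof is the construction of $Y_0$. For each $v \in X_1 - X_0$, the vertex $v$ already has $\geq d$ neighbors in $X_0 \subseteq Y_0$, so its expansion demand is automatic. The difficulty is with the vertices $v \in X_2 - X_1$ whose $X_0$-neighborhood has fewer than $d$ elements: such $v$ forces $Y_0$ to contain enough of its $X_1 - X_0$-neighbors. I would use the rank assumption $\rank \cM \leq dn$, together with the 0-extension property, to argue that the total "deficit" $\sum_{v \in X_2 - X_1} \max(0, d - |N_G(v) \cap X_0|)$ is controlled by a quantity proportional to $d|X_0|$. A greedy or potential-function covering argument in the bipartite neighborhood graph between $X_2 - X_1$ and $X_1 - X_0$ should then let us choose $Y_0 - X_0$ of size at most $(d-1)|X_0|$ that resolves all demands, exploiting the defect bound to avoid duplicating neighbors unnecessarily.

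The main obstacle I foresee is making this covering step tight: a naive argument would produce a $Y_0$ of size proportional to $|X_2 - X_1|$, which can be far larger than $d|X_0|$. The rank hypothesis must be used in a nontrivial way — most likely by comparing $r(G[X_1])$ and $r(G[X_0]) + d|X_1 - X_0|$ via the 0-extension property, and then bootstrapping this into a bound on the number of structurally problematic vertices in $X_2 - X_1$. The factor $2$ in the claimed bound presumably absorbs slack in this counting step, while the $d \geq 2$ hypothesis ensures the geometric factor $d/(d-1)$ is bounded.
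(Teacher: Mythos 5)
Your inductive strategy hinges on constructing a set $Y_0$ with $X_0\subseteq Y_0\subseteq X_1$ and $|Y_0|\leq d|X_0|$ such that every $v\in X_2-Y_0$ has at least $d$ neighbours in $Y_0$. This intermediate claim is false in general, and the rank hypothesis does not rescue it. For instance, take $t=2$, $X_0=\{a\}$, $X_1=\{a,b_1,\dots,b_m\}$ with $ab_j\in E$ for all $j$, and $X_2=V=X_1\cup\{c_1,\dots,c_m\}$, where $c_j$ is adjacent exactly to $b_j,b_{j+1},\dots,b_{j+d-1}$ (indices cyclic). Each $c_j\in X_2-Y_0$ then forces $Y_0\supseteq\{b_j,\dots,b_{j+d-1}\}$, so $Y_0$ must contain all of $\{b_1,\dots,b_m\}$; thus $|Y_0|\geq m+1$, whereas $d|X_0|=d$. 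The rank bound controls only the size of an $\cM$-basis of $G$, not the bipartite adjacency pattern between $X_1-X_0$ and $X_2-X_1$, so it places no obstruction on this configuration. You flagged this as the ``main obstacle''; it is in fact fatal to the reduction as you have set it up.

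The paper's proof is not an induction on $t$, and it sidesteps the difficulty by never asking the seed to dominate $X_2$ in a single step. It builds a spanning subgraph $G'$ joining each $v\in X_i-X_{i-1}$ to $d$ vertices of $X_{i-1}$, so that $G'$ is $\cM$-independent by the $0$-extension property and $|E(G')|=d|V-X_0|$; it then extends $E(G')$ to an $\cM$-basis $B$ of $G$ by adding a set $E''$ of extra edges, and uses the rank bound exactly once, to conclude $|E''|\leq dn-|E(G')|=d|X_0|$. The seed $K$ is taken to be $X_0$ together with the backward closure in $G'$ of $V(E'')-X_0$, a union of at most $2d|X_0|$ rooted trees of depth at most $t-1$, each of size at most $(d^t-1)/(d-1)$. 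Since $B$ is recoverable from $G'[K]+E''$ by $0$-extensions added level by level, $K$ is a seed by \cref{lem:seed:altdef}: at each stage only \emph{some} vertex outside the current set needs $d$ neighbours inside, which is far weaker than the one-step domination of $X_2$ that your reduction requires.
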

\begin{proof}
    Let $G'=(V,E')$ be a subgraph of $G$ with $|E'|=d|V-X_0|$ edges such that for all $1\leq i\leq t$, $E'$ joins each $v\in X_i-X_{i-1}$ to $d$ vertices in $X_{i-1}$.  
    Since $\cM$ has the $d$-dimensional 0-extension property, $G'$ is $\cM$-independent. Let $E''\subseteq E-E'$ be a set of edges such that $B=(V,E'\cup E'')$ is an $\cM$-basis of $G$. Then $|E'\cup E''|\leq dn $ and hence
    \begin{equation*}\label{eq:2}
        |E''|\leq dn-|E'|= d|X_0|.
    \end{equation*}
    For every $i \in \{1,\ldots,t\}$ and $v\in X_i-X_{i-1}$, we iteratively define a set $Y_v$ as follows. If $v \in X_1 - X_0$, let $Y_v=\{v\}$.
    For $i=2,\dots,t$ and $v\in X_i-X_{i-1}$, let $$Y_v=\{v\}\cup\bigg(\bigcup_{u \in  N_{G'}(v)\cap X_{i-1}}Y_u\bigg).$$ 
    Let $Z$ denote $V(E'')-X_0$ and
    put $$K=X_0\,\cup\, \bigg(\bigcup_{v\in Z}Y_v\bigg).$$  Then $B$ can be obtained from $F=G'[K]+E''$ by a series of 0-extensions, first adding each vertex $v\in X_1-K$, one by one, followed by the vertices $v\in X_2-X_1-K$, and so on. Thus $K$ is an $\cM$-seed of $G$ by \cref{lem:seed:altdef}.
    Furthermore, $|Z|\leq 2|E''|\leq 2d|X_0|$, and $|Y_v|\leq 1+d+\dots +d^{t-1}=(d^{t}-1)/(d-1) $ for every $v\in V-X_0$. 
    Hence,
\[|K|\leq |X_0|+\sum_{v\in Z}|Y_v|\leq 2|X_0|\frac{d^{t+1}}{d-1},\]
    which completes the proof of the lemma. 
\end{proof}

\begin{lemma}\label{lemma:deletablevertices}
    Suppose that $\cM$ has the $d$-dimensional $0$-extension property. Suppose further that $G$ has minimum degree $\delta(G) \geq d+2$, and that $G$ has an $\cM$-seed $K \subseteq V$ and vertices $u',v' \in V-K$ with $u'v' \in E$ and $u' \neq v'$. Then there exist vertices $u,v \in V-K$ with $uv \in E$ and $u \neq v$ satisfying \[r(G) = r(G-u) + d = r(G-v)+d = r(G-u-v) + 2d.\]
\end{lemma}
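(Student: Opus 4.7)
My plan would proceed in three steps. First, I would find adjacent vertices $u,v\in V-K$ with $u\neq v$ such that $K^*:=V-\{u,v\}$ is itself an $\cM$-seed of $G$, obtained from $K$ via iterated applications of \cref{lem:seed:altdef}(b). Second, I would use \cref{lem:seed:altdef}(a) to construct an $\cM$-basis $B$ of $G$ from $K^*$ via two $0$-extensions that add $v$ and then $u$, arranged so that $uv\notin B$. Third, I would read off the rank equalities from the structure of $B$.

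For step one, I would inductively build a chain $K=K_0\subsetneq K_1\subsetneq\cdots$ of $\cM$-seeds of $G$, maintaining the invariant that $V-K_i$ contains a non-loop edge (which holds for $K_0$ by the hypothesis $u'v'\in E$). Given $K_i$ with $|V-K_i|\geq 3$, the seed property of $K_i$ supplies at least one valid extension vertex $x\in V-K_i$ (satisfying $|(K_i+x)\cap N_G(x)|\geq d$); the goal is to show that one can always choose $x$ so that $V-K_i-x$ still contains a non-loop edge. If the non-loop edges in $V-K_i$ do not all share a common vertex, any valid $x$ misses at least one of them. Otherwise they form a star with some centre $c$, and the claim is that some $x$ distinct from $c$ (and, in the one-edge case $cc'$, also distinct from the unique leaf $c'$) is valid: if not, every such $x'$ satisfies $|(K_i+x')\cap N_G(x')|\leq d-1$, and combined with $\deg(x')\geq d+2$ this forces $|N_G(x')\cap(V-K_i-x')|\geq 3$, contradicting the star structure, under which $x'$ has at most one neighbour in $V-K_i-x'$. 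Iterating until $|V-K_i|=2$, the surviving non-loop edge would be the desired pair $uv$. This delicate case analysis, where $\delta(G)\geq d+2$ is essential, is the main obstacle I expect.

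For the remaining two steps, \cref{lem:seed:altdef}(a) applied to the seed $K^*$ would yield an $\cM$-basis $I$ of $G[K^*]$ that $0$-extends to an $\cM$-basis of $G$ by adding $v$ and then $u$. Adding $v$ first requires $d$ neighbours in $V-u$, and $\deg(v)\geq d+2$ leaves $\geq d+1$ such neighbours even after removing $u$; adding $u$ next requires $d$ neighbours in $V-v$, available symmetrically, which excludes the edge $uv$ from the resulting basis $B$. The $0$-extension property then ensures that $B-u$, $B-v$, and $B-u-v$ are all $\cM$-independent (each is obtainable from $I$ by a subset of the two $0$-extensions). Since $uv\notin B$, the edges of $B$ incident to $u$ and those incident to $v$ are disjoint sets of size $d$, giving $|B-u|=|B-v|=|B|-d$ and $|B-u-v|=|B|-2d$, hence the lower bounds $r(G-u),r(G-v)\geq r(G)-d$ and $r(G-u-v)\geq r(G)-2d$. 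For the matching upper bounds, any $\cM$-independent subgraph $B'$ of $G-u$ with $V(B')=V-u$ extends to an $\cM$-independent subgraph of $G$ by a $0$-extension at $u$ using $d$ of its incident edges in $G$, whence $r(G-u)\leq r(G)-d$; $r(G-v)\leq r(G)-d$ follows symmetrically, and $r(G-u-v)\leq r(G)-2d$ by a two-step $0$-extension at $v$ then $u$.
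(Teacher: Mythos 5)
Your proposal is correct and follows essentially the same strategy as the paper: grow the seed $K$ to a seed $K^*$ with $V - K^* = \{u,v\}$ for some non-loop edge $uv$, then read off the rank equalities from the fact that $K^*$, $K^*+u$, and $K^*+v$ are all seeds. The paper packages your iterative construction more compactly as a maximality argument (choose $K'$ maximal among seeds containing $K$ with a non-loop edge in its complement; maximality immediately forces the complementary non-loop edges to form a star and then to be a single edge), and it extracts the rank identities directly from the first condition in the seed definition rather than reconstructing explicit bases; your star case analysis and basis construction are correct expansions of the same two steps.
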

\begin{proof}
    Let $K'$ be an $\cM$-seed of $G$ such that $K \subseteq K'$, there exist distinct vertices $u,v \in V-K$ with $uv \in E$, and $K'$ is maximal subject to these conditions. 
    Since $K'$ is an $\cM$-seed of $G$, there exists $w\in V-K'$ such that $|(K'+w)\cap N_G(w)|\geq d$. It follows from \cref{lem:seed:altdef} that $K' + w$ is also an $\cM$-seed of $G$, and hence by the maximality of $K'$, $K' + w$ covers every edge of $G$ that is not a loop. Hence each non-loop edge of $G$ not covered by $K'$ is incident to $w$. Thus $w\in \{u,v\}$ and we may assume, by symmetry, that $u=w$. Then all non-loop edges incident to $v$ in $G$, except for $uv$, are covered by $K'$, and in particular, $|(K'+v)\cap N_G(v)|\geq d$. It follows that $uv$ is the only non-loop edge in $G$ not covered by $K'$, for otherwise $K'+v$ would contradict the maximality of $K'$. Hence every vertex $w' \in V - K$ satisfies $|K' \cap N_G(w')| \geq d$, and thus by the maximality of $K'$, we have $K' = V - \{u,v\}$. \cref{lem:seed:altdef} now implies that $K'+u$ and $K'+v$ are also $\cM$-seeds of $G$, and hence \[r(G) = r(G-u) + d = r(G-v)+d = r(G-u-v) + 2d,\] as required.
\end{proof}

We say that $\cM$ has the {\em $d$-dimensional double 1-extension property} if, for all subgraphs $G_1,G_2$ of $G_0$ such that $G_1$ is $\cM$-independent and $G_2$ is obtained by a  $d$-dimensional double 1-extension of $G_1$, $G_2$ is $\cM$-independent. The {\em $d$-dimensional looped 1-extension property} is defined analogously.

\begin{lemma}\label{lem:neighbours:clique}
    Suppose that  $\delta(G) \geq d+1$.
    \begin{enumerate}[(a)]
        \item Suppose $\cM$ has the $d$-dimensional double 1-extension property. Let $uv \in E$ with $u \neq v$, and suppose that $r(G) = r(G-u-v)+2d$ holds. Then for every edge $xy \in E_0$ with $x \in N_G(u) - \{u,v\}$ and $y \in N_G(v) - \{u,v\}$, $\{x,y\}$ is $\cM$-linked in $G$.
        \item Suppose $\cM$ has the $d$-dimensional looped 1-extension property. Let $v \in V$ with $vv \in E$, and suppose that $r(G) = r(G-v)+d$. Then for every edge $xy \in E_0$ with $x,y \in N_G(v) - \{v\}$ and $x \neq y$, $\{x,y\}$ is $\cM$-linked in $G$. 
    \end{enumerate}
\end{lemma}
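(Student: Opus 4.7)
The plan is to handle both parts with a common template: assuming for contradiction that $\{x,y\}$ is \emph{not} $\cM$-linked in $G$, apply the corresponding $1$-extension operation ``in reverse'' to build an $\cM$-independent subgraph of $G$ with $r(G)+1$ edges, which is absurd.

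For part (a), we may assume $xy\notin E$, since otherwise $\{x,y\}$ is trivially $\cM$-linked. If $\{x,y\}$ is not $\cM$-linked in $G$, then $r(E+xy)>r(E)$, and monotonicity of the rank together with $E(G-u-v)\subseteq E$ yields $r(E(G-u-v)+xy)>r(E(G-u-v))$. Fix an $\cM$-basis $B_0$ of $G-u-v$; then $B_0+xy$ is $\cM$-independent. Now apply a $d$-dimensional double $1$-extension to $B_0+xy$ on the edge $xy$, identifying the two new vertices with $u$ and $v$ of $G$: attach $u$ to $\{x\}\cup S_u$ and $v$ to $\{y\}\cup S_v$, and add the edge $uv$, where $S_u\subseteq N_G(u)\setminus\{v,x\}$ and $S_v\subseteq N_G(v)\setminus\{u,y\}$ are arbitrary subsets of size $d-1$. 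These exist because $\delta(G)\geq d+1$ combined with $uv\in E$ forces $|N_G(u)|,|N_G(v)|\geq d+1$, and at most two neighbours ($v$ and $x$, respectively $u$ and $y$) are excluded. By construction the resulting graph $H$ is a subgraph of $G$, and the double $1$-extension property of $\cM$ makes $H$ be $\cM$-independent. But $|E(H)|=|B_0|+2d+1=r(G-u-v)+2d+1=r(G)+1$, contradicting $H\subseteq G$.

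Part (b) follows the same template with the looped $1$-extension. Assume $xy\notin E$, fix an $\cM$-basis $B_0$ of $G-v$, and observe as before that $B_0+xy$ is $\cM$-independent. Apply the $d$-dimensional looped $1$-extension on $xy$, identifying the new vertex with $v$: take its neighbours to be $\{x,y\}\cup S_v$ with $S_v\subseteq N_G(v)\setminus\{v,x,y\}$ of size $d-2$, and add the loop $vv$. Because $vv\in E$ puts $v\in N_G(v)$, we have $|N_G(v)|\geq d+1$ and three distinct excluded elements $v,x,y$, so $|N_G(v)\setminus\{v,x,y\}|\geq d-2$ and $S_v$ exists (the case $d=2$ is the trivial one with $S_v=\emptyset$; note that $d\geq 2$ is anyway required for this operation). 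The resulting graph $H$ is a subgraph of $G$ (using $vx,vy,vv\in E$) and is $\cM$-independent by the looped $1$-extension property, but $|E(H)|=|B_0|+d+1=r(G-v)+d+1=r(G)+1$, a contradiction.

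The argument is brief and essentially mechanical; the only point requiring care is verifying that the prescribed neighbourhoods of the new vertices can actually be realised inside $N_G(u)$ and $N_G(v)$ so that $H$ lies inside $G$, which is exactly where the minimum-degree hypothesis $\delta(G)\geq d+1$ enters. I do not anticipate any deeper obstacle.
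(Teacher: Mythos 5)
Your proof is correct and follows essentially the same approach as the paper's: assume $\{x,y\}$ is not $\cM$-linked, pass to a basis of $G-u-v$ (resp.\ $G-v$) and add $xy$, then apply the relevant $1$-extension operation to produce an $\cM$-independent subgraph of $G$ with $r(G)+1$ edges. Your write-up is somewhat more explicit than the paper's (in particular spelling out how $\delta(G)\geq d+1$ is used to realise the new vertices' neighbourhoods inside $G$), but the underlying argument is the same.
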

\begin{proof}
    \emph{(a)} Suppose, for a contradiction, that $\{x,y\}$ is not $\cM$-linked in $G$ for some $xy\in E_0$ with $x \in N_G(u) - \{u,v\}$ and $y \in N_G(v) - \{u,v\}$. Then $r(G-u-v+xy)=r(G-u-v)+1$. We can construct a subgraph of $G$ by applying a double 1-extension operation to $G-u-v+xy$. Since the double 1-extension operation preserves independence in $\cM$, we have 
    \[r(G)\geq r(G-u-v+xy)+2d=r(G-u-v)+2d+1,\] a contradiction.
    
    \emph{(b)} We proceed as in the previous case. Suppose, for a contradiction, that $\{x,y\}$ is not $\cM$-linked in $G$ for some $xy \in E_0$ with $x,y \in N_G(v) - \{v\}$ and $x \neq y$. Then $r(G-v+xy)=r(G-v)+1$. Since the looped 1-extension operation preserves independence in $\cM$, and we can construct a subgraph of $G$ by applying this operation to $G-v+xy$, we have 
    \[r(G)\geq r(G-v+xy)+d=r(G-v)+d+1,\] a contradiction.
\end{proof}

\section{The proof of \texorpdfstring{\cref{thm:mindegree}}{Theorem 1.2}}\label{section:mindegree}

We continue in the setting of the previous section. Let $G_0=(V,E_0)$ be a semisimple graph with $n$ vertices, and let $\cM=(E_0,r)$ be a matroid on $E_0$ with the $d$-dimensional 0-extension, double 1-extension and looped 1-extension properties. We say that a spanning subgraph $G=(V,E)$ of $G_0$ is {\em $\cM$-rigid} if $r(E)=r(E_0)$. 
Thus, $G$ is $d$-completable (resp.\ $d$-hyperconnected) if it is $\cM$-rigid for $\cM = \cS_d(K_n^\circ)$ (resp.\ $\cM = \cH_d(K_n)$).

Suppose that $\cM = \cS_d(K_n^\circ)$ or $\cM = \cH_d(K_n)$ and that $G$ has sufficiently large minimum degree.
We will prove that, under these conditions, if $G$ has an $\cM$-seed that does not cover the non-loop edges in $E$, then $G$ is $\cM$-rigid. \cref{lem:existsk:mindegree} below establishes the existence of such a seed by showing that $G$ has an $\cM$-seed of cardinality at most $n/3$.
In its proof, we will make use of the Chernoff bound for binomial random variables, see, e.g., \cite[Theorem 2.1]{janson2011random}.
\begin{lemma}\label{chernoff}
    Let $X\sim {\mathrm{Bin}}(k,p)$ and $0<\alpha<1$. Then the following hold.
    \begin{enumerate}[(a)]
        \item $\Prob\big(X\leq (1-\alpha) kp\big)\leq \exp \big(-\frac{\alpha^2 kp}{2}\big)$
        \item $\Prob\big(X\geq (1+\alpha) kp\big)\leq \exp \big(-\frac{\alpha^2 kp}{2+\alpha}\big)$
    \end{enumerate}
    
\end{lemma}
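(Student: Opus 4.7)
The plan is to prove this via the classical Chernoff exponential-moment method applied to a sum of independent Bernoullis. Write $X=\sum_{i=1}^k X_i$ where $X_i\sim\mathrm{Bernoulli}(p)$ are independent. By independence and the elementary inequality $1+x\leq e^x$,
$$\E[e^{tX}]=\prod_{i=1}^k \E[e^{tX_i}]=(1-p+pe^t)^k\leq \exp\bigl(kp(e^t-1)\bigr)$$
for every $t\in\R$. This exponential moment bound is the only probabilistic input; the rest is one-variable calculus.

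For part \emph{(b)}, fix $t>0$ and apply Markov's inequality to the nonnegative variable $e^{tX}$:
$$\Prob\bigl(X\geq (1+\alpha)kp\bigr)=\Prob\bigl(e^{tX}\geq e^{t(1+\alpha)kp}\bigr)\leq \exp\bigl(kp(e^t-1)-t(1+\alpha)kp\bigr).$$
Minimising the exponent over $t>0$ gives the optimiser $t=\log(1+\alpha)$ and the bound $\exp(-kp\,\psi(\alpha))$ with $\psi(\alpha)=(1+\alpha)\log(1+\alpha)-\alpha$. The claim then reduces to the real inequality $\psi(\alpha)\geq \alpha^2/(2+\alpha)$ on $(0,1)$, which I would verify by checking that both sides vanish at $\alpha=0$ and that $\psi'(\alpha)-\frac{d}{d\alpha}\bigl(\alpha^2/(2+\alpha)\bigr)\geq 0$, the latter reducing to $\log(1+\alpha)\geq \alpha(\alpha+4)/(2+\alpha)^2$, which follows from comparing Taylor expansions.

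For part \emph{(a)}, the symmetric argument uses $t<0$: since $x\mapsto e^{tx}$ is then decreasing, Markov's inequality gives
$$\Prob\bigl(X\leq (1-\alpha)kp\bigr)\leq \exp\bigl(kp(e^t-1)-t(1-\alpha)kp\bigr),$$
with the optimal choice $t=\log(1-\alpha)<0$. This yields the bound $\exp(-kp\,\varphi(\alpha))$ where $\varphi(\alpha)=\alpha+(1-\alpha)\log(1-\alpha)$, and the conclusion follows from $\varphi(\alpha)\geq \alpha^2/2$, a standard one-variable inequality (e.g.\ from the Taylor expansion $\varphi(\alpha)=\sum_{j\geq 2}\alpha^j/(j(j-1))$, each term of which is nonnegative and whose first term equals $\alpha^2/2$).

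There is no real obstacle here: the probabilistic content is entirely captured by the moment generating function computation, and everything else is calculus. The only mildly technical step is the elementary inequality $(1+\alpha)\log(1+\alpha)-\alpha\geq \alpha^2/(2+\alpha)$, which is deliberately a slightly weaker form of the sharp exponent in order to keep the stated bound clean; if one wanted a self-contained treatment, this would be the one place to carry out an explicit derivative comparison rather than invoking a reference.
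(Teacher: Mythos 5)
Your proof is correct; note, however, that the paper does not prove this lemma at all --- it simply cites it as Theorem~2.1 of Janson, \L{}uczak and Ruci\'nski, \emph{Random Graphs}. What you have written is essentially the argument given there: bound the moment generating function of a Bernoulli sum via $1+x\leq e^x$, apply Markov's inequality to $e^{tX}$, optimise over $t$ (giving $t=\log(1\pm\alpha)$), and then weaken the optimal exponents $\varphi(\alpha)=\alpha+(1-\alpha)\log(1-\alpha)$ and $\psi(\alpha)=(1+\alpha)\log(1+\alpha)-\alpha$ to the cleaner forms $\alpha^2/2$ and $\alpha^2/(2+\alpha)$. Your Taylor-series argument for $\varphi(\alpha)\geq\alpha^2/2$ is clean; for the other inequality, a slightly slicker route than ``comparing Taylor expansions'' is to observe that $\psi'(s)=\log(1+s)\geq \frac{2s}{2+s}\geq \frac{s(s+4)}{(2+s)^2}=\frac{d}{ds}\frac{s^2}{2+s}$ (the middle inequality is the standard Pad\'e bound on $\log$, and the last is equivalent to $2(2+s)\geq 4+s$), and integrate from $0$ to $\alpha$. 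In any case there is no gap: the probabilistic content is correctly isolated in the MGF bound, and the calculus steps are all sound.
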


\begin{lemma}\label{lem:existsk:mindegree}
    Let $G_0$, $\cM$ and $G$ be as defined at the beginning of this section. Suppose that $d\geq2$, $n\geq 10^5 d^2$, and  $\delta(G)\geq (n+d-1)/2$. Then $G$ has an $\cM$-seed $K$ with $|K|< n/3$.
\end{lemma}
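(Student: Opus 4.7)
The plan is to apply \cref{lem:small_kernel} with $t = 1$: find a small random $X_0 \subseteq V$ such that every $v \in V - X_0$ has at least $d$ neighbours in $X_0$, so that the chain $X_0 \subseteq X_1 := V$ satisfies the hypothesis of \cref{lem:small_kernel}. This would produce an $\cM$-seed $K$ with $|K| \le 2|X_0| d^2/(d-1) = O(d|X_0|)$, and so it suffices to find $X_0$ of size $O(n/d)$.

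I would pick $X_0$ by including each vertex of $V$ independently with probability $p := c(\log n + d)/n$, where $c$ is a large absolute constant. Two good events must occur:
\begin{enumerate}[(i)]
    \item $|X_0| \le 2pn = 2c(\log n + d)$; this fails with probability at most $\exp(-pn/3)$ by \cref{chernoff}(b) with $\alpha = 1$.
    \item $|N_G(v) \cap X_0| \ge d$ for every $v \in V$. Since $\deg(v) \ge (n+d-1)/2$, the random variable $|N_G(v) \cap X_0|$ dominates a binomial with mean $\mu_v \ge c(\log n + d)/3 \ge 4d$ (for $c$ large enough). Applying \cref{chernoff}(a) with $\alpha = 1 - (d-1)/\mu_v \ge 3/4$ gives
    \[
        \Prob\bigl[|N_G(v) \cap X_0| < d\bigr] \le \exp(-\alpha^2\mu_v/2) \le \exp(-\mu_v/4) \le n^{-c/12},
    \]
    and a union bound over $V$ makes the failure probability at most $n^{1 - c/12}$, which is $\ll 1$ for $c$ sufficiently large.
\end{enumerate}
Both events then occur simultaneously with positive probability, so a suitable $X_0$ exists, and \cref{lem:small_kernel} yields an $\cM$-seed $K$ with $|K| \le 2|X_0| d^2/(d-1) \le 8cd(\log n + d)$, using $d^2/(d-1) \le 2d$ for $d \ge 2$.

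The hypothesis $n \ge 10^5 d^2$ is used to verify $|K| < n/3$. We split $8cd(\log n + d) < n/3$ into $8cd^2 < n/6$ (immediate from $n \ge 10^5 d^2$) and $8cd \log n < n/6$, the latter equivalent to $n/\log n > 48cd$. Both inequalities are easy to verify at the threshold $n = 10^5 d^2$ for every $d \ge 2$, and then persist for larger $n$ by monotonicity of $n/\log n$.

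The main technical nuisance I expect is tuning $p$: it must be large enough (at least $\Omega((\log n)/n)$) for the union bound in (ii) to defeat the tail probabilities, yet small enough that $|X_0| = O(n/d)$ so that \cref{lem:small_kernel} yields a seed of size below $n/3$. The two constraints are compatible precisely when $n \gg d \log n$, and the explicit threshold $n \ge 10^5 d^2$ ensures this with plenty of slack. Apart from this balancing, \cref{lem:small_kernel} does all the structural work.
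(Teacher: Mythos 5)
Your proposal is correct and follows essentially the same strategy as the paper: choose a random subset $X_0$, verify via Chernoff that every vertex has at least $d$ neighbours in $X_0$, and apply \cref{lem:small_kernel} with $t=1$. The only differences are cosmetic — the paper samples with $p = 1/(16d)$ (yielding $|X_0| < n/(12d)$) rather than your $p = c(\log n + d)/n$, and you should replace your use of \cref{chernoff}(b) with $\alpha=1$ by, say, $\alpha = 1/2$, since the lemma as stated requires $0 < \alpha < 1$.
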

\begin{proof}
     We claim that 
     there exists a set $X_0\subseteq V$ of size $|X_0|< n/(12d)$ such that $|N_G(v)\cap X_0|\geq d$ for all $v\in V$.    Applying
     \cref{lem:small_kernel} to this $X_0$ with  $t=1$ and $X_1=V$ then gives an $\cM$-seed $K$ of $G$ with
    \[ |K|\leq \frac{2d^2}{d-1}|X_0|<  \frac{n}{3}, \]
    as required.
    
    To prove our claim, let $p=1/(16d)$, and let $X$ be a random subset of $V$ obtained by putting each vertex $v\in V$ into $X$, independently,  with probability $p$. 
    Then, for every $v\in V$, we have $|N_G(v)\cap X|\sim {\mathrm{Bin}}(\deg(v),p)$. Hence, by using \cref{chernoff}(a) with $\alpha = 1/2$, we obtain
    $$ \Prob\Big(|N_G(v)\cap X|<d\Big)\leq \Prob\Big(|N_G(v)\cap X|<\frac{\deg(v)\cdot p}{2}\Big)\leq \exp\Big(-\frac{\deg(v)\cdot p}{8}\Big).$$
    Let $A$ denote the event that there exists some $v\in V$ satisfying $|N_G(v)\cap X|<d$. Since $\deg(v)\cdot p\geq n/(32d)$, the union bound gives \[\Prob(A)\leq n\cdot\exp\left(-\frac{n}{256d}\right)\leq n\cdot\exp\left(-\sqrt n\right) <1/2.\] 
    A similar computation, using \cref{chernoff}(b) with $\alpha = 1/3$, shows that the event $B$ that $|X|\geq n/(12d)$ has probability $\Prob(B)<1/2$. Thus, there is a nonzero probability that neither $A$ nor $B$ occurs. This implies that there exists a set $X_0$ of size $|X_0|< n/(12d)$ satisfying $|N_G(v)\cap X_0|\geq d$ for all $v\in V$. 
\end{proof}

\paragraph*{Proof of \texorpdfstring{\cref{thm:mindegree}}{Theorem 1.1}.} \hspace{-1em}
Let us start by recalling that in the $d=1$ case, $\mathcal{H}_d(G)$ and $\mathcal{S}_d(G)$ are equal to the graphic matroid and the even cycle matroid of $G$, respectively. Thus $G$ is $1$-hyperconnected if and only if it is connected, and $G$ is $1$-completable if and only if each connected component of $G$ is non-bipartite. It is easy to verify that these conditions are satisfied under our assumptions on the minimum degree of $G$. Therefore, throughout the rest of the proof we may assume that $d \geq 2$.

We first prove \emph{(a)} by showing that $h_d=10^5d^2$ suffices.
To this end, assume that $n \geq 10^5d^2$, let $G = (V,E)$ be a simple graph on $n$ vertices, and let $\cM = \cH_d(K_n)$. Our goal is to show that $G$ is $\cM$-rigid; since this holds if and only if the $\cM$-closure of $G$ is $\cM$-rigid, we may assume that $G$ is $\cM$-closed. 

By \cref{lem:ext}(b), $\cM$ has the $d$-dimensional $0$-extension and double $1$-extension properties. It also vacuously has the $d$-dimensional looped $1$-extension property, since $K_n$ is loopless. Thus 
\cref{lem:existsk:mindegree}
implies that $G$ has an $\cM$-seed $K$ with $|K| < n/3$. Since $\delta(G)\geq (n+d - 1)/2 \geq |K|+2$, there exists an edge $u'v'\in E$ with $u',v'\in V-K$. \cref{lemma:deletablevertices} now implies that there exists an edge $uv \in E$ with $u,v \in V-K$ such that $r(G) = r(G-u-v) + 2d$.

Let $X = N_G(u) \cap N_G(v)$, and note that we have
\[|X| = |N_G(u)| + |N_G(v)| - |N_G(u) \cup N_G(v)| \geq n+d - 1 - n = d - 1.\]
\cref{lem:neighbours:clique}(a) and the fact that $G$ is $\cM$-closed imply that, for every $y \in N_G(v)$ and $x \in X$, we have $xy \in E$. In particular, $X$ is a clique of $G$, and hence so is $X+v$. Since $|X+v| \geq d$, we can obtain a spanning subgraph of $G[N_G(v)]$ from $G[X+v]$ by a series of $d$-dimensional $0$-extensions. A similar count shows that for every $w \in V - N_G(v)$, we have $|N_G(w) \cap N_G(v)| \geq d$, and hence we can obtain a spanning subgraph of $G$ from $G[N_G(v)]$ by a series of $0$-extensions. Since $G[X+v]$ is $\cM$-rigid, \cref{lem:ext}(b) now implies that $G[N_G(v)]$ and $G$ are both $\cM$-rigid, as desired.

The proof of \emph{(b)} is similar, but more involved. We show that we can take $s_d = 10^5d^2$.
Fix $n \geq 10^5d^2$, let $G = (V,E)$ be a semisimple graph on $n$ vertices, and let $\cM = \cS_d(K_n^\circ)$. Our goal is to show that $G$ is $\cM$-rigid, and hence we may assume, without loss of generality, that $G$ is $\cM$-closed. 

We first make a general observation. For each $v\in V$, let $\lambda_v=1$ if $v$ is incident with a loop in $G$, and otherwise put $\lambda_v=0$. By assumption, the degree of every vertex $v$ in $G$ is at least $(n+d-\lambda_v)/2$, and hence for any $u,v \in V$ with $u \neq v$, we have
\begin{equation}\label{eq:neighboursets}
    |N_G(u) \cap N_G(v)| = |N_G(u)| + |N_G(v)| - |N_G(u) \cup N_G(v)| \geq (n+d) - \frac{\lambda_u + \lambda_v}{2} - n.
\end{equation}
Hence $|N_G(u) \cap N_G(v)| \geq d-1$ with equality only if $u$ and $v$ are both incident with loops in $G$.

As in part \emph{(a)}, we can use \cref{lem:existsk:mindegree} 
and the hypothesis on $\delta(G)$ to find an $\cM$-seed $K$ of $G$ and an edge $u'v' \in E$ with $u',v' \in V-K$ and $u' \neq v'$. \cref{lemma:deletablevertices} now guarantees the existence of an edge $uv \in E$ with $u,v \in V-K$ and $u \neq v$ that satisfies \[r(G) = r(G-u) + d = r(G-v) + d = r(G-u-v) + 2d.\]

We claim that $G[N_G(v)]$ is $\cM$-rigid. Let $X = N_G(u) \cap N_G(v)$. \cref{lem:neighbours:clique}(a) and the assumption that $G$ is $\cM$-closed implies that $G[X]$ is a looped clique, and in particular, it is $\cM$-rigid. If $vv \notin E$, then \cref{eq:neighboursets} shows that $|X| \geq d$ and by \cref{lem:neighbours:clique}(a), $xy \in E$ for every $x \in X$ and $y \in N_G(v)$. It follows that we can obtain a spanning subgraph of $G[N_G(v)]$ from $G[X]$ using a series of $0$-extensions, and hence $G[N_G(v)]$ is $\cM$-rigid by \cref{lem:ext}(a). On the other hand, if $vv \in E$, then by \cref{lem:neighbours:clique}(b), $G[N_G(v)]$ is a (not necessarily looped) clique, and thus $u \in N_G(v)$ implies that $N_G(v) - u \subseteq X$. Hence $|X| \geq d$, and we may obtain a spanning subgraph of $G[N_G(v)]$ from $G[X]$ by adding $u$ using a 0-extension if $u\not\in X$. Once again, \cref{lem:ext}(a) implies that $G[N_G(v)]$ is $\cM$-rigid, as claimed.

To finish the proof, note that by \cref{eq:neighboursets}, for every $w\in V- N_G(v)$, 
either $ww\in E$ and $|N_G(w)\cap N_G(v)|\geq d-1$; or $ww \notin E$ and $|N_G(w)\cap N_G(v)|\geq d$. Thus we can obtain a spanning subgraph of $G$ from $G[N_G(v)]$ by a series of 0-extensions, and hence $G$ is $\cM$-rigid by \cref{lem:ext}(a). 
\qed

\medskip

It follows from \cref{lem:kalai} that the complete bipartite graph $K_{m,m}$ is neither $2$-hyperconnected nor 1-completable, for all $m\geq 2$. We can now use the fact that the so-called coning operation transforms a graph which is not $(d-1)$-hyperconnected to one which is not $d$-hyperconnected (see \cite[Theorem 5.1]{K}) to deduce that the complete tripartite graph  $K_{m,m,d-2}$ is not $d$-hyperconnected, for all $d\geq 2$. A similar argument, with \cite[Lemma 6]{JJT2} in place of \cite[Theorem 5.1]{K}, shows that $K_{m,m,d-1}$ is not  \locallycompletable{d}, for all $d\geq 1$. (See \cite[Lemma 9]{JJT2} for an explicit proof of this fact.) 
These examples show that the bound on $\delta(G)$ in \cref{thm:mindegree}(a) is tight when $d\geq 2$, and the bound on $\delta(G)$ in \cref{thm:mindegree}(b) is tight when $d\geq 1$. The bound in \cref{thm:mindegree}(a) is not tight when $d=1$, since $\delta(G)\geq (n-1)/2$ is sufficient to imply that $G$ is connected, and hence $1$-hyperconnected. 

We believe that the bounds $s_d=O(d^2)$ and $h_d=O(d^2)$ in the statement of \cref{thm:mindegree} are not optimal.
The theorem might remain true with $s_d=2d+2$; this would be best possible, since a $d$-completable graph on at least $d+1$ vertices must have at least $dn - \binom{d}{2}$ edges, and this is not guaranteed by the bound $\delta(G) \geq (n+d-1)/2$ in the regime $d+1 \leq n \leq 2d+1$. Similar considerations show that the optimal value for $h_d$ might be $2d$.

\section{The proof of \texorpdfstring{\cref{thm:main}}{Theorem 1.3}}

The inductive hypothesis in our proof of \cref{thm:main} requires us to work with a new 
version of connectivity for bipartite graphs. 
We say that a bipartite graph $G=(V,E)$
with bipartition $(A,B)$ 
is \emph{$k$-biconnected} if $|A|,|B|\geq k$ and for every subset $W$ of $V$ with $|W\cap A|\leq k-1$ and $|W\cap B|\leq k-1$, the graph $G-W$ is connected. Note that every $(2k-1)$-connected bipartite graph is $k$-biconnected. The graph $G$ is said to be \emph{critically $k$-biconnected} if it is $k$-biconnected, but for each $v\in V$, $G-v$ is not $k$-biconnected.

Given a graph $G=(V,E)$, we say that $A \subseteq V$ is a \emph{vertex cover} of 
{$G$ if every edge in $E$ is incident with a vertex in $A$.}
The size of the smallest vertex cover of $G$ is denoted by $\tau (G)$.
   
We will use the following two results to prove  \cref{thm:main}. The first shows that every vertex cover of a critically $k$-biconnected bipartite graph $G$ is relatively large. 
The second uses this result to show that if $k$ is sufficiently large, then $G$ has a $\cB_d(G)$-seed which does not cover its edges.

\begin{theorem}\label{lem:tau_bound}
    Let $G=(V,E)$ be a critically $k$-biconnected bipartite graph.
    Then $\tau(G)\geq \frac{|V|}{2k^2}$.
\end{theorem}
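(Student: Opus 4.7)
The plan is to exploit the criticality of $G$ vertex by vertex. For each $v \in V$, I would produce a small ``local block'' that witnesses criticality, and then use the collection of blocks to assemble a large matching, which by König's theorem (applicable since $G$ is bipartite) yields the desired vertex cover. The first step is to note that $k$-biconnectivity already forces $\delta(G) \geq k$: if some $v \in A$ had fewer than $k$ neighbors, then $W := N_G(v) \subseteq B$ would satisfy $|W \cap A| = 0$ and $|W \cap B| < k$, and removing $W$ would isolate $v$ in $G - W$, contradicting $k$-biconnectivity. Next, criticality yields, for each $v \in V$, either the degenerate case $|A| = k$ or $|B| = k$ (in which case $|V| \leq 2k$ and the bound $\tau(G) \geq |V|/(2k^2)$ follows easily from $\tau(G) \geq 1$), or a set $W_v \subseteq V - v$ with $|W_v \cap A|, |W_v \cap B| \leq k-1$ (so $|W_v| \leq 2k - 2$) such that $G - v - W_v$ is disconnected. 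Let $F_v$ denote a minimum-size component of $G - v - W_v$; every $u \in F_v$ then satisfies $N_G(u) \subseteq F_v \cup W_v \cup \{v\}$, a set of size at most $|F_v| + 2k - 1$.

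The central technical step is a Mader-style structural bound asserting that one can choose $(v, W_v, F_v)$ with $|F_v| \leq 2k^2 - 2k$. I would establish this by adapting Mader's classical exchange argument for critically $k$-connected graphs. Proceeding by contradiction, take a triple $(v^*, W^*, F^*)$ minimizing $|F_v|$, pick some $u \in F^*$, and use criticality of $u$ to obtain a second bi-cut $W_u$ and a component $F_u$ of $G - u - W_u$. A submodularity-type inequality for vertex cuts, applied separately on the $A$-side and on the $B$-side to respect the split cardinality constraints $\leq k-1$ per side, should then produce a nonempty fragment of size strictly smaller than $|F^*|$ (lying, say, inside $F^* \cap F_u$ or $F^* \setminus (F_u \cup W_u \cup \{u\})$), contradicting minimality unless $|F^*|$ is already bounded by $O(k^2)$. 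The quantitative constant $2k^2 - 2k$ should arise as the product of the per-side cut size ($k-1$), a factor $2$ for the two sides, and a factor $k$ for the fragment ``width''. This is the main technical obstacle: the split bipartite constraints make the standard submodular argument subtle and require careful bookkeeping of how the two cuts interact on each side of the bipartition.

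Finally, with the fragment bound in hand, I would iteratively select vertices $v_1, v_2, \ldots$ together with their local blocks $F_{v_i} \cup W_{v_i} \cup \{v_i\}$ such that the blocks are pairwise disjoint. Each block has size at most $(2k^2 - 2k) + (2k-2) + 1 \leq 2k^2 - 1$, and because $F_{v_i}$ is nonempty and $\delta(G) \geq k \geq 1$, each block contains at least one edge incident to a vertex of $F_{v_i}$; this edge lies entirely within the block by the neighborhood-containment property. Picking one such edge per block yields a matching of size at least $\lceil |V|/(2k^2) \rceil$, and König's theorem then gives $\tau(G) \geq |V|/(2k^2)$. A secondary obstacle at this stage is that after removing some blocks the residual graph need not be critically $k$-biconnected, so the iterative selection must either invoke criticality only in the original $G$ (finding fresh $v_i$'s whose fragments fit within the residual unused vertex set, using the Mader-style exchange argument of the previous paragraph to reroute when a candidate fragment spills outside the residual set) or maintain a weaker residual connectivity property throughout the iteration.
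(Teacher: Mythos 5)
Your proposal takes a genuinely different route from the paper, but it has two gaps that are not routine to fill.

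First, the ``Mader-style'' fragment bound $|F_v| \leq 2k^2 - 2k$ for critically $k$-biconnected bipartite graphs is the load-bearing step of your argument, and you offer only a sketch of how it might be proved. The classical exchange argument for critically $k$-connected graphs rests on submodularity of vertex cuts, and the split constraint $|W \cap A| \leq k-1$, $|W \cap B| \leq k-1$ does not interact cleanly with it: when two bi-cuts are uncrossed, the resulting corner sets need not satisfy both per-side bounds simultaneously, so the standard argument does not obviously carry over. Establishing such a fragment bound would be a substantive structural result in its own right, comparable in difficulty to the theorem you are trying to prove, and you do not argue that the bound is even true, let alone prove it.

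Second, even granting the fragment bound, the iterative selection of pairwise disjoint blocks $\{v_i\} \cup W_{v_i} \cup F_{v_i}$ does not go through as described. The cut $W_v$ and fragment $F_v$ witnessing the criticality of $v$ can be located anywhere in $G$, not near $v$, so there is no guarantee that a greedy selection produces $|V|/(2k^2)$ disjoint blocks; after a few choices every remaining vertex's block may collide with the used vertices, and your proposed rerouting via the exchange argument is itself unproven. The paper sidesteps both obstacles by a global double-count rather than a local decomposition: fixing a minimum vertex cover $T$, it pairs each $x \in V - T$ with two of its neighbours separated by an essential separator containing $x$, observes that the resulting multigraph on $T$ has at most $k$-fold multiplicity and that each of its edges has local connectivity at most $2k-1$ in $G[T]$ plus these auxiliary edges, and then invokes the sparse local certificate theorem (\cref{sparse}) to bound the number of distinct auxiliary edges by $(2k-1)|T|$, yielding $|V| - |T| \leq k(2k-1)|T|$. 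That argument requires neither a fragment-size bound nor a disjoint-block packing.
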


Since the proof of \cref{lem:tau_bound} uses different ideas than the rest of the paper, we defer it to the next section.

\begin{lemma}\label{lem:existsk}
    Let $d\geq 2$ and $k=10^5d^3$. Then every critically $k$-biconnected bipartite graph $G=(V,E)$ has a $\cB_d(G)$-seed $K$ with $|K|< \tau (G)$.
\end{lemma}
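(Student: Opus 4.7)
The plan is to apply \cref{lem:small_kernel} to a suitable nested chain $X_0 \subseteq X_1 \subseteq \cdots \subseteq X_t = V$ of vertex subsets of $G$. Two preliminary observations get us started. First, since $G$ is $k$-biconnected, every vertex $v \in V$ has degree at least $k$: otherwise $W := N_G(v)$ would satisfy $|W \cap A|, |W \cap B| \leq k - 1$ and $v$ would be isolated in $G - W$, contradicting $k$-biconnectedness. Second, by \cref{lem:tau_bound} (which we may freely invoke), $\tau(G) \geq |V|/(2k^2)$. It therefore suffices to produce a $\cB_d(G)$-seed of size strictly less than $|V|/(2k^2)$.

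I would next try to build the nested chain by a probabilistic sampling, in the spirit of \cref{lem:existsk:mindegree}. Choose a parameter $p = \Theta(1/k)$ and include each vertex of $V$ into $X_0$ independently with probability $p$, so that $|X_0|$ is concentrated around $|V|/k$. Define $X_i$ inductively as $X_i := X_{i-1} \cup \{v \in V : |N_G(v) \cap X_{i-1}| \geq d\}$. Using the Chernoff bounds of \cref{chernoff} together with the minimum-degree lower bound $\delta(G) \geq k$, one can argue that with positive probability the $d$-bootstrap percolation process spreads through the graph within a bounded number of rounds $t = O(1)$, so that $X_t = V$. Then \cref{lem:small_kernel} yields a $\cB_d(G)$-seed $K$ of size at most $2|X_0|d^{t+1}/(d-1)$.

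The main obstacle is purely quantitative. A naive probabilistic argument with $p \asymp 1/k$ delivers $|X_0| \asymp |V|/k$, and hence by \cref{lem:small_kernel} a seed of size $\Theta(d^{t+1}|V|/k)$, which is of order $|V|/k$ rather than the required $|V|/k^2$. Closing this factor-$k$ gap requires exploiting the critically $k$-biconnected structure of $G$ more essentially than through \cref{lem:tau_bound} alone. Two strategies seem promising: (a) use the separator certificates witnessing criticality (for each vertex $v$, a set $W_v$ with $|W_v \cap A|, |W_v \cap B| \leq k-1$ such that $(G-v)-W_v$ is disconnected) to deterministically prune $X_0$, removing vertices whose contribution to the bootstrap is already supplied by the separator structure; or (b) iterate the probabilistic construction, sampling a second, smaller random set on the vertices not yet covered, and re-expand. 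Either way, the polynomial gap $k = 10^5 d^3$ is chosen so that the constants coming from Chernoff, the number of bootstrap rounds $t$, and the factor $d^{t+1}$ in \cref{lem:small_kernel} can all be absorbed into the final inequality $|K| < |V|/(2k^2) \leq \tau(G)$.

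Once such a nested chain is produced, \cref{lem:small_kernel} delivers the seed $K$ with $|K| \leq 2|X_0|d^{t+1}/(d-1) < \tau(G)$, completing the proof. I expect the careful constant-tracking in the bootstrap analysis — verifying that the concentration inequalities close with the chosen $p$, $t$, and $k$ — to be the most delicate step of the argument.
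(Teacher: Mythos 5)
Your write‑up correctly assembles the right machinery --- sampling plus Chernoff, bootstrap coverage, and \cref{lem:small_kernel} --- and you correctly diagnose the quantitative obstruction: a naive sample of $V$ with density $\Theta(1/k)$ gives $|X_0|\asymp n/k$ and hence a seed of size $\asymp n/k$, which is a factor of $k$ too large. But neither of your two proposed repairs (pruning $X_0$ via separator certificates, or iterating the sampling) is what closes the gap, and it is not clear either would work. The actual idea is simpler and you miss it: \emph{sample from a minimum vertex cover $A$, not from all of $V$.} Since you want $|K|<\tau(G)$, you should aim for $|X_0|\lesssim p\cdot\tau(G)$, which is exactly what sampling each vertex of $A$ with probability $p$ gives. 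The factor-$k$ gap disappears because the budget is measured against $\tau(G)$, not against $n/k^2$; \cref{lem:tau_bound} is only used at the very end to absorb the small $\exp(-\Theta(d))\cdot n$ error term from Chernoff.

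The second missing ingredient is the handling of vertices of $A$ that have few neighbours inside $A$. Set $A'=\{v\in A: |N_G(v)\cap A|\le k-d\}$. Every $v\in V-A'$ has $|N_G(v)\cap A|>k-d$ (for $v\in V-A$ because $A$ is a cover and $\deg(v)\ge k$; for $v\in A-A'$ by definition), so a random $S\subseteq A$ with $p=\tfrac{d-1}{4d^3}$ has $|N_G(v)\cap S|\ge d$ with failure probability $\exp(-\Theta(d))$. Conversely, every $v\in A'$ has $\ge d$ neighbours in $V-A\subseteq V-A'$, so they are absorbed one level later. The chain is therefore $X_0\subseteq X_1:=X_0\cup(V-A')\subseteq X_2:=V$ with $t=2$, and \cref{lem:small_kernel} gives $|K|\le 2|X_0|\tfrac{d^3}{d-1}$, which is $<\tau(G)/2+n/(4k^2)<\tau(G)$ once $|X_0|\le p\tau(G)+dn\exp(-30d)$ and \cref{lem:tau_bound} are invoked. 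Without the ``sample inside $A$'' step and the $A'$ device, your sketch does not constitute a proof.
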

\begin{proof}
    Suppose that $G$ is a critically $k$-biconnected graph.
    Let $A$ be a vertex cover of $G$ of size $\tau(G)$, and define $A'=\{v\in A:|N_G(v)\cap A|\leq k-d\}.$
 Let $$p=\frac{d-1}{4d^3}\;\;\;\;\;\;  \text{ and }\;\;\; \;\;\; \eta=p\cdot \tau(G)+\frac{dn}{\exp(30d)}.$$ We claim the following:\medskip
 
\noindent \textbf{Claim.}    There exists a set $X_0\subseteq A$ of size $|X_0|\leq \eta$
        such that for every $v\in V-A'$, we have $|N_G(v)\cap X_0|\geq d$.\medskip
 
   We first show how this claim implies the statement of the lemma.
    Set $X_1=X_0\cup (V-A')$ and $X_2=V$. Note that $|N_G(v)\cap X_{i-1}|\geq d$ for all  $i\in \{1,2\}$ and $v\in X_i-X_{i-1}$.
    Thus, it follows from \cref{lem:small_kernel} that $G$ has a  $\cB_d(G)$-seed $K$ with
    \[ |K|< \eta\cdot \frac{2d^3}{d-1}= \frac{\tau(G)}{2}+\frac{2d^4n}{(d-1)\exp({30d})}< \frac{\tau(G)}{2}+\frac{n}{4k^2}. \]
    Using \cref{lem:tau_bound}, we get $|K|<\tau(G)$, as required.

    To prove our claim, we use a probabilistic argument.
    Let $S$ be a random subset of $A$ obtained by putting each vertex $v\in A$ into $S$, independently,  with probability $p$. Then $\E|S|=p\cdot \tau(G)$. Let $B_{S}=\{v\in V-A': |N_G(v)\cap S|< d\}$. For every $v\in V-A'$, we have $|N_G(v)\cap S|\sim {\mathrm{Bin}}(|N_G(v)\cap A|,p)$. Hence, applying  \cref{chernoff}(a) with $\alpha = 1/2$ gives
    $$ \Prob\big(v\in B_S)
    \leq \Prob\Big(|N_G(v)\cap S|<\frac{|N_G(v)\cap A|\cdot p}{2}\Big)\leq \exp\Big(\frac{-(k-d)p}{8}\Big).$$
        Since $\frac{(k-d)p}{8}\geq 30d$, it follows that $$\E|B_{S}|=\sum_{v\in V-A'}\Prob(v\in B_{S})\leq  n\exp({-30d}).$$
        We obtain
        $$\E\big(|S|+d|B_{S}|\big)=\E|S|+d\cdot\E|B_{S}|\leq \eta.$$
        Thus, with positive probability, $|S|+d|B_{S}|\leq \eta$. 
         We complete the proof of the claim by fixing such a set $S$, and letting $X_0$ be any set obtained by adding $d$ vertices from $N_G(v) \cap A$ to $S$ for each $v\in B_{S}$.
\end{proof}

\cref{thm:main}  follows immediately from our next result, using the fact that every $(2k-1)$-connected bipartite graph is $k$-biconnected.

\begin{theorem}\label{thm:bicon} For every integer $d\geq 1$, there exists an integer $k_d = O(d^3)$ such that every $k_d$-biconnected bipartite graph  is \birigid{d}.
\end{theorem}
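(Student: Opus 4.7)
The plan is to prove \cref{thm:bicon} by strong induction on $|V|$, choosing $k_d = 10^5 d^3$ to match \cref{lem:existsk}. The $d = 1$ case is immediate since $\cB_1$ is the cycle matroid and every $1$-biconnected bipartite graph is connected. For $d \geq 2$, the base case is handled by a direct verification when $|V|$ is below a threshold polynomial in $d$.

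In the inductive step, if $G$ is not critically $k_d$-biconnected then some vertex $v$ can be removed preserving $k_d$-biconnectivity, so the inductive hypothesis applied to $G - v$ together with a $0$-extension at $v$ via \cref{lem:ext}(a) (feasible because $v$ has at least $k_d \geq d$ neighbours) shows $G$ is $d$-birigid. Hence we may assume $G$ is critically $k_d$-biconnected, and after replacing $G$ by its $\cB_d$-closure, $\cB_d$-closed. By \cref{lem:existsk}, $G$ has a $\cB_d(G)$-seed $K$ with $|K| < \tau(G)$, so $K$ does not cover some edge $u'v' \in E$. Since $\delta(G) \geq k_d \geq d + 2$, \cref{lemma:deletablevertices} yields an edge $uv \in E$ with $u, v \in V - K$, $u \neq v$, and $r(G) = r(G - u - v) + 2d$. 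Taking $u \in A$, $v \in B$ and setting $A^* := N_G(v) \ni u$, $B^* := N_G(u) \ni v$, we have $|A^*|, |B^*| \geq k_d \geq d$ (every vertex in a $k_d$-biconnected bipartite graph has degree at least $k_d$). Applying \cref{lem:neighbours:clique}(a) with $\cM = \cB_d$ (whose double $1$-extension property is given by \cref{lem:ext}(a)) together with $\cB_d$-closedness shows that every pair $x \in B^* - v$, $y \in A^* - u$ is an edge of $G$, so $G \supseteq K_{A^*, B^*}$, which is a $d$-birigid subgraph by \cref{lem:kalai}.

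The crux is to extend this rigid core to all of $V$. Let $S$ be the set of vertices reached from $A^* \cup B^*$ by iterated $0$-extensions in $G$; by repeated application of \cref{lem:ext}(a), $G$ contains a $d$-birigid subgraph on $S$, so it suffices to show $S = V$. A short argument combining $0$-extension with $\cB_d$-closedness first strengthens the standard bound to $|N_G(w) \cap B^*| \leq d - 1$ for every $w \in A - A^*$: otherwise a $0$-extension of $w$ into $K_{A^*, B^*}$ would give a $d$-birigid subgraph on $A^* \cup B^* + w$, and closedness would then force $wv \in E$, contradicting $w \notin A^*$; the analogous bound holds on $B - B^*$. Suppose for contradiction that $T := V - S \neq \emptyset$, so each $w \in T$ has $|N_G(w) \cap S| \leq d - 1$. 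The ``boundary'' $W := N_G(T) \cap S$ then satisfies $|W \cap A| \leq (d-1)|T \cap B|$ and $|W \cap B| \leq (d-1)|T \cap A|$. When both $|T \cap A|, |T \cap B| \leq (k_d - 1)/(d - 1)$, the set $W$ is a valid separator in the definition of $k_d$-biconnectivity which disconnects $T$ from the non-empty set $S - W$ (non-empty since $|S| \geq 2 k_d > |W|$), contradicting $k_d$-biconnectivity.

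The main obstacle is the complementary sub-case, when one of $|T \cap A|, |T \cap B|$ exceeds $(k_d - 1)/(d - 1)$. Here the plan is to exploit that every $w \in T$ has at least $k_d - (d - 1)$ neighbours inside $T$, so $G[T]$ is a bipartite graph whose sides are both $\Omega(k_d)$ and which has high minimum degree. The expected route is to show that $G[T]$ inherits enough biconnectivity from $G$ to invoke the inductive hypothesis, producing a $d$-birigid spanning subgraph of $G[T]$, which then combines with $K_{A^*, B^*}$---via a chain of $0$-extensions or a double $1$-extension traversing $W$---to produce a $d$-birigid spanning subgraph of $G$. The cubic dependence $k_d = O(d^3)$ emerges from the seed bound in \cref{lem:existsk} combined with the $(d - 1)$-factor in the neighbourhood constraint above.
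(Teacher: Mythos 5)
Your proof tracks the paper's argument closely through the key middle portion: the $d=1$ base case, the minimal-counterexample (induction) framework, the reduction to the critically $k_d$-biconnected case via $0$-extension, the use of \cref{lem:existsk} to find a seed $K$ with $|K| < \tau(G)$, and the application of \cref{lemma:deletablevertices} and \cref{lem:neighbours:clique}(a) (together with closedness/edge-maximality) to produce an edge $uv$ with a complete bipartite graph between $N_G(u) - v$ and $N_G(v) - u$. At that point, however, the two arguments diverge sharply. The paper's final step is short and self-contained: since $|A|,|B|\geq k+1$, one of $u,v$ (say $u$) has degree at least $k+1$; the complete bipartite structure between $N_G(u)$ and $N_G(v)$ then shows that any set deleting at most $k-1$ vertices from each colour class of $G-v$ leaves a vertex of $N_G(u)$ that is universal to the survivors of $N_G(v)$, so $G-v$ remains $k$-biconnected --- contradicting criticality. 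No further rigidity argument is needed once this is observed.

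Your proposal instead tries to propagate rigidity outward from the core $K_{A^*,B^*}$ by $0$-extensions, and this is where the genuine gap lies. Let $S$ be the reachable set and $T = V - S$; you correctly handle the subcase where both $|T\cap A|$ and $|T\cap B|$ are at most $(k_d-1)/(d-1)$ by producing a forbidden separator. But the complementary subcase --- $T$ large on one side --- is left as a plan rather than a proof, and the plan does not go through as stated. The claim that $G[T]$ ``inherits enough biconnectivity from $G$ to invoke the inductive hypothesis'' is false in general: $S$ is not small (it contains $A^*\cup B^*$, which has $\geq 2k_d$ vertices), so its removal is well outside the regime where $k_d$-biconnectivity of $G$ constrains the connectivity of $G[T]$. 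High minimum degree in $G[T]$ does not imply $k_d$-biconnectivity (consider two copies of $K_{m,m}$ joined by a sparse bridge). Moreover, even if $G[T]$ were $d$-birigid, gluing it to the rigid subgraph on $S$ through a boundary $W$ of size $O(d|T|)$ requires a nontrivial argument you do not supply. In short, your middle portion is sound and parallel to the paper, but the final step is an unverified sketch where the paper has a clean criticality contradiction that avoids the problem entirely.
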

\begin{proof}

If $d=1$, then $\cB_d(G)$ is the graphic matroid of $G$, and thus $G$ is $1$-birigid if and only if it is connected, which is further equivalent to $G$ being $1$-biconnected. Hence we may take $k_1 = 1$.

Therefore, let us assume that $d \geq 2$.
We prove that $ k = k_d = 10^5d^3$ suffices in this case. Assume, for a contradiction, that this is not true; that is, that there is a $k$-biconnected bipartite graph which is not \birigid{d}.
    Choose a counterexample $G=(V,E)$ such that $|V|$ is as small as possible and, subject to this condition,  $|E|$ is as large as possible. Let $(A,B)$ be the bipartition of $G$ and let $a=|A|$ and $b = |B|$. We may assume that $a \leq b$. If $a=k$, then the $k$-biconnectivity of $G$ implies that $G=K_{a,b}$ 
    and $G$ is \birigid{d}, a contradiction. Hence $a,b\geq k+1$.
    
    Suppose that $G-v$ is $k$-biconnected for some vertex $v\in V$. Then, by the minimality of $|V|$, $G-v$ is \birigid{d}.  Since we can obtain a spanning subgraph of $G$ from $G-v$ by a 0-extension operation, \cref{lem:ext}(a) implies that $G$ is also \birigid{d}, a contradiction.
    Hence no such vertex $v$ exists, that is, $G$ is critically $k$-biconnected. 

By \cref{lem:existsk}, we may choose  
    a $\cB_{d}(G)$-seed $K$ for $G$ such that $|K|< \tau (G)$. Then $K$ is also a $\cB_{d}(K_{a,b})$-seed of $G$ since the definition of a $\cB_{d}(K_{a,b})$-seed depends only on the restriction of $\cB_{d}(K_{a,b})$ to $E$. By \cref{lemma:deletablevertices,lem:neighbours:clique}(a), there exist $u,v\in V-K$ with $uv\in E$ such that
    for all $x\in N_G(u)-\{v\}$ and  $y\in N_G(v)-\{u\}$, the pair $\{x,y\}$ is $\cB_{d}(K_{a,b})$-linked in $G$. The maximality of $|E|$ now implies that $xy\in E$ for all such $x$ and $y$.
    Since $\min\{|V_1|,|V_2|\}\geq k+1$, the $k$-biconnectivity of $G$ implies that either $u$ or $v$ has degree at least $k+1$ in $G$.
    By symmetry, we may assume that $\deg_G(u)\geq k+1$. The previous observation then implies that to separate the neighbourhood of $v$ in $G$, we must delete at least $k+1$ vertices from the color class of $v$. It follows that $G-v$ is also $k$-biconnected, contradicting the fact that $G$ is critically $k$-biconnected.
\end{proof}

We believe that the statement of \cref{thm:main} remains true when $k_d = 2d^2$. If so, this bound would be best possible; see \cref{conjecture:abrigid} below.

\section{Vertex covers of critically \texorpdfstring{\boldmath $k$}{k}-biconnected graphs}

In order to finish our proof of \cref{thm:main}, it remains to prove \cref{lem:tau_bound}. We shall use the following result from the theory of graph connectivity. For a pair $u,v\in V$ in a graph $G=(V,E)$, let $\kappa(u,v;G)$ denote the maximum number of pairwise internally disjoint paths from $u$ to $v$ in $G$.

\begin{theorem}\cite[Theorem 2.16]{CT}\label{sparse}
    Let $G=(V,E)$ be a graph on $n$ vertices and let $k$ be a positive integer. Then $G$ has a spanning subgraph $H$ with $|E(H)|\leq kn - \binom{k+1}{2}$ such that \[\kappa(u,v;H)\geq \min \{k, \kappa(u,v;G) \}\] holds for all $u,v\in V$.
\end{theorem}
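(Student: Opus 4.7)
The plan is to prove \cref{sparse} by adapting the Nagamochi--Ibaraki sparsification argument from edge connectivity to vertex connectivity, using a single maximum-adjacency (MA) ordering of the vertex set together with a careful rule for selecting back-edges. Concretely, I would compute an ordering $v_1, v_2, \ldots, v_n$ of $V$ by picking $v_1$ arbitrarily and then, for $i \geq 2$, choosing $v_i$ to maximise $|N_G(v_i) \cap \{v_1,\ldots,v_{i-1}\}|$. The subgraph $H$ would be obtained by adding, for each $i \geq 2$, a set of $\min\{k, i-1\}$ edges from $v_i$ to $\{v_1,\ldots,v_{i-1}\}$ (all such edges if there are at most $k$, otherwise some canonical subset of size exactly $k$). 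The edge count is then immediate: vertices $v_2,\ldots,v_{k+1}$ contribute at most $1+2+\cdots+k = \binom{k+1}{2}$ edges and each of the remaining $n-k-1$ vertices contributes at most $k$, giving
\[|E(H)| \leq \binom{k+1}{2} + k(n-k-1) = kn - \binom{k+1}{2}.\]

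The core task is to show that $\kappa(u,v;H) \geq \min\{k, \kappa(u,v;G)\}$ for every pair $u,v \in V$. I would argue by induction on $n$, examining the last vertex $w = v_n$ of the MA-ordering. If $w \notin \{u,v\}$, the restriction of the ordering to $V-w$ is itself an MA-ordering of $G-w$, and the back-edge rule applied to this restricted ordering produces a subgraph of $H-w$ that, by induction, is a $k$-certificate for $G-w$; a cut-lifting argument then shows that any vertex separator of $u$ from $v$ in $H$ of size less than $\min\{k,\kappa(u,v;G)\}$ would yield a separator of the same size either in this inductive certificate or in $G$ itself, contradicting one of the hypotheses. If $w \in \{u,v\}$, say $w = u$, then by construction $u$ has at least $\min\{k,\kappa(u,v;G)\}$ back-neighbours in $H$, and one can combine Menger's theorem with the inductive hypothesis applied to the pairs $(w',v)$ (where $w'$ ranges over these back-neighbours) to assemble the required internally disjoint $u$--$v$ paths in $H$.

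I expect the cut-lifting step above to be the main obstacle. Unlike the edge-connectivity case, where Nagamochi--Ibaraki obtain a clean certificate by extracting $k$ disjoint MA-forests and observing that forest removal decreases every edge cut by at most one, a small vertex cut in $H$ need not correspond to a vertex cut of comparable size in $G$. The substitute for the forest-removal lemma is an uncrossing-style structural argument exploiting the defining MA-inequality
\[|N_G(v_i)\cap\{v_1,\ldots,v_{i-1}\}| \;\geq\; |N_G(v_j)\cap\{v_1,\ldots,v_{i-1}\}| \quad \text{for all } j \geq i,\]
which lets one modify a minimum $u$--$v$ separator of $H$ into one that respects the ordering (e.g.\ is a prefix in the MA-order on one side) and hence restricts cleanly either to $G-w$ or to $G$ itself. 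Setting up this uncrossing rigorously — so that the restricted separator remains a separator of the same size and the induction can be closed — is the technical heart of \cite[Theorem~2.16]{CT} and is where I would expect the bulk of the proof effort to lie.
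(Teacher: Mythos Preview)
The paper does not prove this statement: \cref{sparse} is quoted verbatim from \cite[Theorem~2.16]{CT} and used as a black box in the proof of \cref{lem:tau_bound}. There is therefore no in-paper proof to compare your proposal against.

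As for the proposal itself, the overall strategy (an MA-type ordering with at most $k$ back-edges per vertex) is indeed the route taken in the Nagamochi--Ibaraki / Cheriyan--Thurimella literature, and your edge count is correct. However, your inductive step contains a genuine gap. In the case $w=u=v_n$, knowing that $u$ has $\min\{k,\kappa(u,v;G)\}$ back-neighbours $w'$ in $H$ and that each pair $(w',v)$ satisfies the inductive bound in $H-u$ does \emph{not} let you assemble internally disjoint $u$--$v$ paths: the $w'$--$v$ path systems for different $w'$ may share internal vertices, and Menger's theorem gives you no control over this. The published proofs avoid this by working not with a single MA-ordering but with a decomposition $H=F_1\cup\cdots\cup F_k$ into edge-disjoint spanning forests produced by a scan-first search, together with the lemma that deleting any one $F_i$ lowers every local vertex connectivity by at most one; this layered structure is precisely what lets the induction close. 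Your ``uncrossing-style'' substitute is not fleshed out, and as stated it is not clear it can be made to work without reintroducing the forest decomposition.
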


We say that the subgraph $H$ in \cref{sparse} is
a {\em sparse local certificate} of $G$ {\em with respect to $k$}.

Let $G=(V,E)$ be a $k$-biconnected bipartite graph with bipartition $(A,B)$. A separator $S=A'\cup B'$ of $G$ with $A'\subseteq A$ and $B'\subseteq B$ is called {\em essential} if $|A'|=k$ and $|B'|\leq k-1$ or $|B'|=k$ and $|A'|\leq k-1$ holds, and each vertex in $S$ has a neighbour in each connected component of $G-S$. Let $X\subseteq V$ and let \[\hat X=\{ x\in X : \hbox{there is an essential separator $S$ of $G$ with $x\in S$}\}.\] A function $f:\hat X\to V\times V$ is said to be a \emph{pairing} (for $X$) if, for each $x \in \hat X$, we have $f:x\mapsto (u,v)$ where $u,v$ are two neighbours of $x$ chosen from different components of $G-S$ for some essential separator $S$ of $G$ with $x\in S$. The pairing $f$ gives rise to a multigraph $G^f_X=(V,E^f_X)$ on vertex set $V$ and edge set $E^f_X=\{uv:(u,v)=f(x) \mbox{ for some } x\in \hat X\}$.

\begin{lemma}\label{cl}
    Let $G$, $\hat X$, $f$ and $G^f_X$ be as above. Then $G^f_X$ has $|\hat X|$ edges. In addition,
$E\cap E^f_X=\varnothing$
    and
    the multiplicity of each edge
$uv$ in $G^f_X$ is at most $k$.
\end{lemma}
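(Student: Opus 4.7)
The plan is to dispatch the three assertions in order; the first two are essentially bookkeeping, and all the content sits in the multiplicity bound.

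For the edge count, I would note that $G^f_X$ is built by placing one edge $f(x)$ per vertex $x\in\hat X$ (with multiplicities recording any coincidences among the $f(x)$'s), so $|E^f_X|=|\hat X|$ counted with multiplicity. For disjointness from $E$, if $f(x)=(u,v)$ is witnessed by an essential separator $S$ with $x\in S$, then $u$ and $v$ lie in distinct components of $G-S$; in particular $u,v\notin S$ and there is no edge of $G$ between them, so $uv\notin E$ and $E\cap E^f_X=\varnothing$.

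The multiplicity bound is the real content. Fix an edge $uv\in E^f_X$ and let
\[ Y_{uv}=\{x\in\hat X : f(x)\in\{(u,v),(v,u)\}\}, \]
so that the multiplicity of $uv$ in $G^f_X$ is $|Y_{uv}|$. My plan is to show that $Y_{uv}$ is trapped in a single side of a single essential separator. Since every $x\in Y_{uv}$ has $u$ and $v$ as neighbours and $G$ is bipartite, $u$ and $v$ share one side of the bipartition while $Y_{uv}$ lies entirely in the opposite side; say $u,v\in B$ and $Y_{uv}\subseteq A$. Pick any $x_0\in Y_{uv}$ together with a witnessing essential separator $S_0=A'_0\cup B'_0$, and let $C_u,C_v$ denote the components of $G-S_0$ containing $u$ and $v$, respectively. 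The key observation is that any other $x\in Y_{uv}$ is a common neighbour of $u\in C_u$ and $v\in C_v$, so if $x\notin S_0$ then $x$ would have to lie simultaneously in $C_u$ and in $C_v$, which is impossible; hence $Y_{uv}\subseteq S_0$. Intersecting with $A$ yields $Y_{uv}\subseteq A'_0$, and the definition of an essential separator gives $|A'_0|\le k$, whence $|Y_{uv}|\le k$.

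The only genuine obstacle is this last step, and its heart is the combinatorial observation that a common neighbour of $u$ and $v$ cannot escape any separator of $\{u,v\}$: the existence of \emph{one} essential separator through some $x_0\in Y_{uv}$ is enough to swallow \emph{all} of $Y_{uv}$ into the $A$-side of that separator, which has size at most $k$ by definition.
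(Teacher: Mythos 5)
Your proof is correct and follows essentially the same route as the paper's: for the multiplicity bound, you pick a single essential separator witnessing one preimage of the edge $uv$, observe that every other preimage is a common neighbour of $u$ and $v$ and hence cannot escape that separator, and then bound the relevant side of the separator by $k$. The paper phrases this slightly differently (fixing the separator attached to the $x$ that ``defines'' the edge, and bounding by $|S\cap B|\le k$), but the underlying observation is identical; your extra care with the ordered pair $(u,v)$ versus $(v,u)$ is a small tidiness improvement.
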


\begin{proof}
    The first assertion follows from the definition of $G^f_X$.
    To see the second part, let $uv$ be an edge of $G^f_X$
    defined by some $x\in \hat X$ and some essential separator $S$ with $x\in S$.
    We may suppose that $u,v\in A$. Since $S$ separates $u$ and $v$ in $G$, $uv\not\in E$ and hence $E\cap E^f_X=\varnothing$.
    Also note that if $f(x') = (u,v)$ holds for some $x' \in \hat X$, then we necessarily have $x' \in B$ (since $G$ is bipartite) and $x' \in S$ (since $ux',vx' \in E$ and $S$ separates $u$ and $v$).
    Hence the multiplicity of $uv$ in $G^f_X$ is at most $|S\cap B|\leq k$, as claimed.
\end{proof}

\paragraph{Proof of \texorpdfstring{\cref{lem:tau_bound}}{Theorem 5.1}.}  \hspace{-1em}
Let $(A,B)$ be the bipartition of $V$. If $\min \{|A|,|B|\}=k$, then $G=K_{k,k}$ and $\tau(G)=k=\frac{|V|}{2}\geq \frac{|V|}{2k^2}$ holds. Hence we may assume that $|A|,|B|\geq k+1$.

Let $T\subseteq V$ be a smallest vertex cover of $G$, and let $X=V-T$. Let $\hat X$ be defined as above. Since $G$ is critically $k$-biconnected and $|A|,|B|\geq k+1$, for each vertex $x\in X$ (indeed, for each vertex $x \in V$) there exists an essential separator $S$ of $G$ with $x\in S$. Thus $\hat X=X$. Choose a pairing $f : X \to V \times V$ for $X$ and consider the multigraph $G^f_X$. Since $T$ is a vertex cover of $G$ and $T\cap X=\varnothing$, each edge of $G^f_X$ is induced by $T$. Let $F$ be obtained from $E^f_X$ by keeping only one copy of each edge, and let $G^+=G[T]\cup F$. By \cref{cl}, $G^+$ is a simple graph and $|X|\leq k|F|$.

\begin{claim}\label{cl2}
    For each $uv\in F$, we have $\kappa(u,v;G^+)\leq 2k-1$.
\end{claim}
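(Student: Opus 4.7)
The plan is to prove $\kappa(u,v;G^+)\leq 2k-1$ by interposing a convenient spanning subgraph $G'$ of $G$ and establishing $\kappa(u,v;G^+)\leq \kappa(u,v;G')\leq 2k-1$. I would define $G'=(V,E')$ with $E' = E(G[T]) \cup \{xa_x, xb_x : x \in X\}$, where $f(x)=(a_x,b_x)$ for every $x \in X$. Since $a_x,b_x$ are neighbours of $x$ in $G$, $G'$ is a spanning subgraph of $G$. For the fixed edge $uv\in F$, the definition of $F$ supplies some $x\in X$ with $f(x)=(u,v)$ and an essential separator $S$ with $x\in S$ such that $u,v$ lie in different components of $G-S$. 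The essentiality of $S$ forces $|S|\leq k+(k-1)=2k-1$, and since $G'$ spans $G$, $S$ continues to separate $u$ from $v$ in $G'$; Menger's theorem then gives $\kappa(u,v;G')\leq |S|\leq 2k-1$.

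The core step will be to prove $\kappa(u,v;G^+)\leq \kappa(u,v;G')$. For each edge $e\in F$, I would fix a representative $\sigma(e)\in X$ whose $f$-image equals $e$ (as an edge); since each $y\in X$ contributes to exactly one $F$-edge, the preimages over distinct edges are disjoint, and any such choice makes $\sigma$ injective. Given pairwise internally disjoint $u$-$v$ paths $P_1,\dots,P_m$ in $G^+$, lift each $P_i$ to a walk $W_i$ in $G'$ by replacing every $F$-edge $ab$ used by $P_i$ with the length-$2$ path $a-\sigma(ab)-b$, which is available in $G'$ by construction; the remaining edges of $P_i$ lie in $G[T]\subseteq E'$. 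The internal vertices of $W_i$ split into a $T$-part (the internal vertices inherited from $P_i$) and an $X$-part (the inserted $\sigma$-vertices). The $T$-parts are pairwise disjoint by hypothesis. Since $G^+$ is simple---using that $E\cap E^f_X=\varnothing$ by \cref{cl} together with the simplicity of $G[T]$ and $F$---internally vertex-disjoint paths in $G^+$ are edge-disjoint, so each $F$-edge is used by at most one $P_i$; combined with the injectivity of $\sigma$ and the fact that $T\cap X=\varnothing$, this makes the walks $W_1,\dots,W_m$ pairwise internally disjoint. Extracting a simple $u$-$v$ subpath from each $W_i$ produces $m$ internally disjoint $u$-$v$ paths in $G'$, which yields $\kappa(u,v;G')\geq m$.

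The main obstacle I anticipate is managing this lifting cleanly: one needs the inserted $\sigma$-vertices across different walks to be distinct, and this relies on both the simplicity of $G^+$ and the injectivity of the representative choice $\sigma$. Once these pieces are in place, the two inequalities combine to give $\kappa(u,v;G^+)\leq \kappa(u,v;G')\leq 2k-1$, as claimed.
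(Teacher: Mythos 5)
Your proof is correct, and while the structural fact it exploits is the same as the paper's---an $F$-edge $ab$ whose endpoints are separated by $S$ is ``witnessed'' by an $X$-vertex $x\in S$ with $f(x)$ equal to the pair $\{a,b\}$---the execution is genuinely different. The paper works directly in $G^+$ and exhibits a mixed vertex--edge cut: deleting the vertex set $S-X$ together with the set $F'$ of $F$-edges whose endpoints are separated by $S$ disconnects $u$ from $v$, and the bound $|S-X|+|F'|\leq |S-X|+|S\cap X|=|S|\leq 2k-1$ follows from an injection of $F'$ into $S\cap X$. You instead construct an auxiliary subgraph $G'\subseteq G$ by replacing each $F$-edge with a length-two path through its chosen representative $\sigma$-vertex, prove $\kappa(u,v;G^+)\leq\kappa(u,v;G')$ by lifting internally disjoint $u$-$v$ paths, and then apply the ordinary vertex form of Menger's theorem in $G'$ with $S$ as the separator. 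This sidesteps the mixed-cut step and the explicit injection from $F'$ into $S\cap X$, trading them for the simplicity of $G^+$ and the injectivity of $\sigma$; you handle both correctly, in particular the distinctness of the inserted $\sigma$-vertices across the lifted walks. The paper's argument is shorter, but yours makes the mechanism behind the bound more explicit by realising the ``cost'' of each $F$-edge as an actual vertex of $G$.
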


\begin{proof}
    Fix $x \in X$ with $f(x)=(u,v)$. By definition, there is an essential separator $S$ with $x\in S\cap X$ in $G$ that separates $u$ and $v$. Observe that if $f(x')=(u',v')$ for some pair $u',v'$ separated by $S$, then $x'\in S\cap X$ must hold. Let $F'\subseteq F$ be the set of edges in $F$ whose end-vertices are separated by $S$ in $G$. Then the pair $u,v$ belongs to different components of $G^+-(S-X)-F'$, and hence $\kappa(u,v;G^+)\leq |S-X|+|F'|\leq |S-X|+|S\cap X|=|S|\leq 2k-1$.
\end{proof}

To complete the proof, we choose a sparse local certificate $H^+=(T,E^+)$ of $G^+$ with respect to $2k-1$, which exists by \cref{sparse}. \cref{cl2} implies that we have $\kappa(u,v;H^+) = \kappa(u,v;G^+)$ for every $uv \in F$, and hence $F\subseteq E^+$. Therefore \[|X|\leq k|F|\leq k|E^+| \leq k(2k-1)|T|,\] which gives $|V|-|T|\leq k(2k-1)|T|$ and $|V|\leq 2k^2|T|$. Hence $\tau(G)=|T|\geq \frac{|V|}{2k^2}$, as required. 
\qed
\medskip

\cref{lem:tau_bound} gives a lower bound on the size of vertex covers of critically $k$-biconnected bipartite graphs. We may adapt the proof of \cref{lem:tau_bound} to obtain a better bound on $\tau(G)$ when $G$ is \emph{critically $k$-connected}, i.e., when $G$ is $k$-connected  but $G - v$ is not $k$-connected for all vertices $v$ of  $G$. 

\begin{theorem}\label{lem:tau_bound2}
    Let $G=(V,E)$ be a critically $k$-connected graph. Then $\tau(G)\geq \frac{|V|}{k+1}.$
\end{theorem}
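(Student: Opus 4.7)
The plan is to adapt the proof of \cref{lem:tau_bound} to the critically $k$-connected setting, exploiting the fact that every essential vertex separator of $G$ has size exactly $k$ rather than up to $2k-1$ as in the bipartite $k$-biconnected case. I would take a smallest vertex cover $T$ of $G$ and set $X = V - T$. Using the critical $k$-connectivity of $G$, for each $v \in X$ I can choose an essential $k$-vertex-separator $S_v$ containing $v$, and define a pairing $f: X \to V \times V$ by $f(v) = (u_v, w_v)$ for some $u_v, w_v \in N_G(v)$ lying in different components of $G - S_v$. This gives a multigraph $G^f_X = (V, E^f_X)$ with $|X|$ edges counted with multiplicity, analogous to the one used in the proof of \cref{lem:tau_bound}.

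I would then verify analogues of \cref{cl} and \cref{cl2} with essentially the same arguments, noting that $E \cap E^f_X = \varnothing$, each edge of $G^f_X$ has multiplicity at most $k$ (by the standard separator argument, since $|S_v| = k$), and the endpoints of every such edge lie in $T$ (because $X$ is independent). Writing $F$ for the underlying simple graph of $E^f_X$ and $G^+ = G[T] \cup F$, the argument of \cref{cl2} now gives $\kappa(u, v; G^+) \leq k$ for each $uv \in F$, the improvement from $2k-1$ to $k$ coming directly from $|S_v|=k$. Applying \cref{sparse} with parameter $k$ then yields a sparse local certificate $H^+$ of $G^+$ with $|E(H^+)| \leq k|T| - \binom{k+1}{2}$ and $F \subseteq E(H^+)$, so $|F| \leq k|T|$.

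The main obstacle is that combining $|X| \leq k|F|$ (from the multiplicity bound) with $|F| \leq k|T|$ naively only gives $|X| \leq k^2|T|$, and hence $\tau(G) \geq |V|/(k^2+1)$, which falls short of the target bound. To recover the missing factor of $k$ and obtain $|X| \leq k|T|$, the proof must refine the pairing so that either the multiplicity bound is in fact $1$ (so that $f$ is injective, yielding $|X| = |F| \leq k|T|$) or the sparse local certificate contains at most $|T|$ edges from $F$. A natural first attempt is to choose each $S_v$ so that $S_v \cap X = \{v\}$, i.e., $S_v - v \subseteq T$, which would force $f$ to be injective; but this is not always achievable (for example, in $G = K_{k,k}$ with $T$ equal to one side of the bipartition, the only essential $k$-separator containing any given $v \in X$ equals $X$ itself). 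The technical heart of the proof is therefore a more delicate selection of essential separators $S_v$ and of the pairs $(u_v, w_v)$ -- possibly combined with a case analysis on $|S_v \cap X|$ -- that establishes $|X| \leq k|T|$ in full generality. Once this is done, $|V| = |X| + |T| \leq (k+1)|T|$ gives $\tau(G) \geq |V|/(k+1)$.
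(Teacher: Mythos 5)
Your framework is the right one and matches the paper's: take a minimum vertex cover $T$, set $X = V - T$, note that every vertex of $X$ lies in an essential separator of size exactly $k$, build a pairing $f$, and bound $|X|$ via a sparse local certificate of $G^+ = G[T] \cup F$. You also correctly diagnose the central difficulty: the generic multiplicity bound of $k$ from \cref{cl} only yields $|X| \leq k|F| \leq k \cdot k|T|$, which is too weak, so you need the pairing multigraph $G^f_X$ to be \emph{simple}, i.e., $|X| = |F|$. But that is precisely the step your proposal does not supply. You observe that demanding $S_v \cap X = \{v\}$ is not always achievable, and then write that ``the technical heart of the proof is therefore a more delicate selection \dots that establishes $|X| \leq k|T|$ in full generality'' -- this is a statement of what remains to be proved, not a proof of it. As written, your argument establishes only the weaker bound $\tau(G) \geq |V|/(k^2+1)$.

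The paper fills this gap with a concrete construction that you would need to reproduce. Assuming $|V| \geq 3k-1$ (the small case $|V| \leq 3k-2$ is handled separately, and your proposal omits it), fix a sequence of essential $k$-separators $S_1,\dots,S_r$ covering $X$, and process them in order. For a fixed $S_j$, pick a component $C$ of $G - S_j$ whose complement $D = V - S_j - V(C)$ has $|D| \geq k$, attach an auxiliary vertex $p$ to $k$ vertices of $D$, and apply Menger to get $k$ internally disjoint $p$--$q$ paths for some $q \in V(C)$. Each newly processed $x \in S_j$ lies on exactly one of these paths as an internal vertex of a length-two subpath $u$--$x$--$v$ with $u \in V(C)$, $v \in D$; set $f(x) = (u,v)$. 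Disjointness of the paths makes the pairs for a fixed $S_j$ pairwise distinct, and an induction on $j$ rules out collisions with earlier $S_i$'s (a collision would force the new vertex into $S_i$, contradicting that it was unprocessed). This gives a simple $G^f_X$, hence $|X| = |F| \leq |E(H^+)| \leq k|T|$ and the claimed bound. Without this (or an equivalent) injectivity argument, the proof is genuinely incomplete.
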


\begin{proof}
We may assume that $k\geq 2$. First suppose that $|V|\leq 3k-2$.
Then $\frac{|V|}{k+1}<3$, so the theorem follows unless $k\leq \tau(G)\leq 2$.
Moreover, $k=\tau(G)$ holds only if $G$ contains $K_{k,|V|-k}$ as a spanning subgraph. Now the criticality of $G$ and $k=2$ imply that $G$ is 
either $K_3$ or $C_4$, for which the statement is clear.

Thus we may assume that $|V|\geq 3k-1$. The rest of the proof and 
our notation is similar to
that of \cref{lem:tau_bound}, except that 
we shall call a separator $S$ of $G$ 
{\itshape essential} if $|S|=k$ holds. 
Since $G$ is critically $k$-connected, for each vertex
$x\in V$ there exists an essential separator  $S$ with $x\in S$. 
Let $T\subseteq V$ be a smallest vertex cover and 
put $X=V-T$.

Next we define a pairing $f:X\to V\times V$ for $X$.
We choose a minimal sequence of essential separators $S_1,S_2,\dots,S_r$ such that
$X\subseteq \bigcup_1^r S$, and for each $j$, from $j=1$ to $j=r$,
we define $f(x)$ for
each $x\in X\cap (S_j-\bigcup_{i=1}^{j-1} S_i)$ sequentially as follows.
Fix $S_j$.
Since $S_j$
is a $k$-separator and $|V|\geq 3k-1$, there is a component $C$ of $G-S_j$
such that $|V(G-S_j-V(C))|\geq k.$ Let $D=V-S_j-V(C)$.
Let $G'$ be obtained from $G$ by
adding a new vertex $p$ and $k$ edges from $p$ to different vertices of $D$.
Let $q\in V(C)$. Now $G'$ is $k$-connected, and hence there exist $k$
internally disjoint paths from $p$ to $q$ in $G'$ by Menger's theorem.
Each vertex $x\in S_j-\bigcup_{i=1}^{j-1} S_i$ belongs to a subpath of length two in this collection
of $k$ paths, with end-vertices $u\in V(C)$ and $v\in D$.
Then we let $f(x)=(u,v)$.

Thus $f$ defines a graph $G_X^f=(V,E_X^f)$.
Since $T$ is a vertex cover of $G$ and $T\cap X=\emptyset$, each edge of $G_X^f$ is induced by $T$.
The key observation is that $G_X^f$ is simple.
The construction of
the pairing implies that the pairs defined for a fixed $S_j$ are
pairwise different. 
Suppose that for $y\in S_j$ we have $f(y)=(u,v)=f(x)$ for some
$x\in \bigcup_1^{j-1} S_i$, where $f(x)$ was defined earlier for some
$S_i$. The vertex $y$ is connected to both $u$ and $v$, and $u,v$ are separated by $S_i$,
which implies that $y\in S_i$, a contradiction. 

We may now use the argument in \cref{cl} and  \cref{cl2} to deduce that $G^+=G[T]\cup E_X^f$ is simple and,
for each $uv\in E_X^f$, we have $\kappa(u,v;G^+)\leq k$.
Then a similar count to that in the proof of \cref{lem:tau_bound} gives $\tau(G)\geq \frac{|V|}{k+1}$.
\end{proof}

The following example shows that the bounds in \cref{lem:tau_bound,lem:tau_bound2} cannot be replaced by $\frac{|V|}{c}$ for any $c < k$. In particular, the bound $\frac{|V|}{k+1}$ in \cref{lem:tau_bound2} is almost tight.
Let $p$ be a positive integer, and let $A_i,B_i, i \in\{1,\ldots,p\}$ be disjoint sets of size $k$. Let us fix elements $a_i \in A_i$ and $b_i \in B_i, i \in \{1,\ldots,p\}$. Let $H_i$ denote the complete bipartite graph on bipartition $(A_i,B_i)$, and let $G$ be obtained from $H_1,\ldots,H_p$ by identifying the vertex sets $B_1 - b_1,\ldots,B_p - b_p$, as well as the vertices $a_1,\ldots,a_p$. It is not difficult to see that the resulting graph $G$ is critically $k$-connected, and in fact, critically $k$-biconnected. We have $|V(G)|=k(p+1)$ and $\tau(G)\leq k+p=\frac{|V(G)|}{k}+(k-1)$. Hence for any $c < k$, we can achieve $\tau(G) < \frac{|V(G)|}{c}$ by choosing a sufficiently large number $p$.

\section{The proof of \texorpdfstring{\cref{thm:redundant}}{Lemma 1.5}(b)}\label{sec:red}

We will need the following result from \cite{JJT2} which characterises the (global) birigidity of a graph in terms of the (global) completability of a related semisimple graph.

\begin{theorem}\label{theorem:birigidtocompletable}\cite[Theorems 17 and 19]{JJT2}
    Let $G$ be a bipartite graph with bipartition $(A,B)$, where $|A|,|B| \geq d$, and let $S \subseteq A$ be a set of $d$ vertices. Let $H$ be the semisimple graph obtained from $G$ by adding a looped complete graph on $S$. Then
    \begin{enumerate}
        \item $G$ is $d$-birigid if and only if $H$ is $d$-completable, 
        \item $G$ is globally $d$-birigid if and only if $H$ is globally $d$-completable.
    \end{enumerate}
\end{theorem}

We will also need the following result on global completablity  which follows immediately from  \cite[Theorem 22]{JJT2}.

\begin{theorem}\label{theorem:globaltolocal}
    Let $H$ be a semisimple graph, let $uv$ be a non-loop edge with $|N_H(u) \setminus \{u\}| \geq d+1$ and $|N_H(v) \setminus \{v\}| \geq d+1$, and let $H'$ be obtained by adding all edges between $N_H(u) \setminus \{u\}$ and $N_H(v) \setminus \{v\}$ (including loops at the vertices in $(N_H(u) \setminus \{u\}) \cap (N_H(v) \setminus \{v\})$). Suppose that $H-u$ and $H-v$ are $d$-completable and $H'$ is globally $d$-completable. Then $H$ is globally $d$-completable.
\end{theorem}

We can combine \cref{theorem:birigidtocompletable,theorem:globaltolocal} to obtain an analogue of \cref{theorem:globaltolocal} for global birigidity.

\begin{lemma}
    \label{lem:glob_birigid_v2}
    Let $G$ be a bipartite graph and $uv$ be an edge in $G$.  
    Suppose that $G-u$ and $G-v$ are both $d$-birigid, and that the graph $G'$ obtained by adding the set of edges 
    $\{u_iv_j : u_i \in N_G(v),\, v_j \in N_G(u)\}$  to $G$ is globally $d$-birigid. Then $G$ is globally $d$-birigid.
 \end{lemma}
\begin{proof}
If $|N_G(u)|\leq d$, then the hypothesis that $G-v$ is $d$-birigid implies that $G-v$ is a complete bipartite graph. We can now use the hypothesis that $G-u$ is birigid to deduce that either $|N_G(v)|\geq d+1$,  or $G$ is a complete bipartite graph. Since $G$ is globally $d$-birigid in both cases, we may assume that $|N_G(u)|\geq d+1$  and, by symmetry, $|N_G(v)|\geq d+1$. 

Let $(A,B)$ be the bipartition of $G$ with $u\in A$. Choose $S\subseteq A-u$ with $|S|=d$, and let $H,H'$ be the semisimple graphs obtained by adding the edges of a looped complete graph on $S$ to $G,G'$, respectively. Then 
\cref{theorem:birigidtocompletable} implies that $H-u,H-v$ are both $d$-completable, and $H'$ is globally $d$-completable. We can now use \cref{theorem:globaltolocal} to deduce that $H$ is globally $d$-completable, and one more application of 
\cref{theorem:birigidtocompletable}(b) tells us  that $G$  is globally $d$-birigid.
\end{proof}

\paragraph*{Proof of \cref{thm:redundant}(b).}  \hspace{-1em}
    Suppose, for contradiction, that $G$ is not globally $d$-birigid. We may assume that $G$ has the maximum number of edges among all counterexamples  with vertex set $V$. Then $G$ is a connected, non-complete bipartite graph, and hence there exist $u,v,u',v'\in V$ with $uv, uv', u'v\in E$ and $u'v'\notin E$. The maximality of $E$ implies that $G+\{u_iv_j : u_i \in N_G(v),\, v_j \in N_G(u)\}$ is globally $d$-birigid, and we can now use
     \cref{lem:glob_birigid_v2} to deduce that $G$ is globally $d$-birigid. This contradicts the choice of $G$.
\qed

\section{Concluding remarks}\label{section:concluding}

\subsection{Completability and hyperconnectivity of highly connected graphs}

We saw in the Introduction that there exist graphs of arbitrarily high connectivity which are not  \locallycompletable{d} or $d$-hyperconnected.  It is possible, however, that the following extension of \cref{thm:main} is true.

\begin{conjecture}\label{conjecture:completabilityLovaszYemini}
    For every positive integer $d$, there exists a positive integer $k_d$ such that every $k_d$-connected graph $G$ on $n$ vertices satisfies $\rank \cS_d(G)  \geq dn - d^2$ and $\rank \cH_d(G)  \geq dn - d^2$.
\end{conjecture}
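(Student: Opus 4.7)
The plan is to adapt the proof strategy of \cref{thm:bicon} to the matroids $\cS_d$ and $\cH_d$, using \cref{lem:tau_bound2} in place of \cref{lem:tau_bound}. Throughout, let $\cM$ denote either $\cS_d(K_n^\circ)$ or $\cH_d(K_n)$. The target rank $dn - d^2$ is precisely the maximum rank in $\cM$ of a spanning complete bipartite subgraph of $K_n$ (or $K_n^\circ$) with both bipartition classes of size at least $d$, so the aim is to locate such a structure inside the $\cM$-closure of $G$.

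Consider a minimum counterexample $G=(V,E)$: minimise $|V|$ first, then maximise $|E|$. If $G-v$ is $k_d$-connected for some vertex $v$, then by induction $\rank\cM(G-v)\geq d(n-1)-d^2$, and a $d$-dimensional $0$-extension at $v$ (which is possible since $\deg_G(v)\geq k_d\geq d$) gives $\rank\cM(G)\geq dn-d^2$ via \cref{lem:ext}, a contradiction. Thus $G$ is critically $k_d$-connected, and \cref{lem:tau_bound2} gives $\tau(G)\geq n/(k_d+1)$. Next, an analogue of \cref{lem:existsk} for $\cM$, with \cref{lem:tau_bound2} replacing \cref{lem:tau_bound} in the proof, should yield an $\cM$-seed $K\subseteq V$ with $|K|<\tau(G)$. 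Since $K$ does not cover all non-loop edges of $G$, \cref{lemma:deletablevertices} produces a pair $u,v\in V-K$ with $uv\in E$, $u\neq v$, and $r(G)=r(G-u-v)+2d$.

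By \cref{lem:neighbours:clique}(a) and the maximality of $|E|$, every pair $\{x,y\}$ with $x\in N_G(u)-\{u,v\}$ and $y\in N_G(v)-\{u,v\}$ is an edge of $G$; in the $\cS_d$ case, \cref{lem:neighbours:clique}(b) additionally provides loops at vertices in $N_G(u)\cap N_G(v)-\{u,v\}$. Setting $A'=N_G(u)-N_G(v)-\{u,v\}$ and $B'=N_G(v)-\{u,v\}$, the complete bipartite graph $K_{A',B'}$ is then contained in $G$. Provided $|A'|,|B'|\geq d$, we have $\rank \cM(G[A' \cup B']) \geq d(|A'|+|B'|)-d^2$ by \cref{lem:kalai} (applied to the bipartite matroid, which agrees with $\cM$ on this bipartite subgraph). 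Each remaining vertex $w\in V-A'-B'$ should have at least $d$ neighbours in $A'\cup B'$ thanks to the high connectivity of $G$, allowing it to be added via a $0$-extension (contributing $d$ to the rank). Extending in this way to all of $V$ yields $\rank\cM(G)\geq dn-d^2$, the desired contradiction.

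The main obstacle is controlling the intersection $N_G(u)\cap N_G(v)$: if it is large then $|A'|<d$ and the extracted bipartite structure is too small; if it is very small and so $|N_G(u)\cup N_G(v)|$ is only about $2k_d$, then vertices in $V-N_G(u)-N_G(v)$ may not have enough neighbours in $A'\cup B'$ for the $0$-extension step to go through. Handling this likely requires a careful choice of the pair $(u,v)$ --- perhaps iterating to minimise the overlap, or combining the bipartite argument with an auxiliary use of the looped $1$-extension property to absorb loops at vertices in $N_G(u)\cap N_G(v)$ in the $\cS_d$ case. Because $k_d$ must dominate the intersection size arising in the worst configuration, I expect the smallest $k_d$ produced by this route to grow roughly as $O(d^3)$, matching the bound of \cref{thm:main}.
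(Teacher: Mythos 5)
This statement is a \emph{conjecture}, not a theorem of the paper. The paper does not prove it; in the paragraph immediately following it, the authors observe that \cref{conjecture:completabilityLovaszYemini} would follow from \cref{thm:main} together with a well-known (open) conjecture of Thomassen that every sufficiently highly connected graph contains a $k$-connected bipartite spanning subgraph. Your proposal should therefore be judged as a candidate proof of an open problem, and it does contain genuine gaps.

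The most serious error is the final $0$-extension step. You claim that ``each remaining vertex $w\in V-A'-B'$ should have at least $d$ neighbours in $A'\cup B'$ thanks to the high connectivity of $G$.'' This is false: $k$-connectivity gives $\deg_G(w)\geq k$, but those neighbours may lie entirely outside $A'\cup B'$. Indeed, this is exactly what happens in the example $K_{m,m,d-2}$ discussed in the introduction, which is highly connected but not $d$-hyperconnected, so there is no ordering of $V-A'-B'$ along which $0$-extensions can be performed. The argument that makes this step work in \cref{thm:mindegree} is the minimum-degree hypothesis, not connectivity; you have conflated the two proofs. The paper's proof of \cref{thm:bicon}, which you say you are adapting, does \emph{not} build a spanning subgraph by $0$-extensions. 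Instead it deduces from the structure of $N_G(u)\cup N_G(v)$ (which, crucially, is a complete bipartite graph because $G$ is bipartite and $uv\in E$ places $N_G(u)$ and $N_G(v)$ in opposite colour classes) that $G-v$ is still $k$-biconnected, contradicting criticality. That contradiction is the engine of the proof and it is what you need a substitute for; the obstacle you identify---controlling $N_G(u)\cap N_G(v)$---is precisely why this substitute is not available in the non-bipartite setting, and you do not resolve it. A smaller point: in the $\cS_d$ case, the loops on vertices in $N_G(u)\cap N_G(v)-\{u,v\}$ come from \cref{lem:neighbours:clique}(a) with $x=y$ (which is a legitimate edge of $K_n^\circ$), not from part (b), which requires a loop at $v$ to apply at all.
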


We note that \cref{conjecture:completabilityLovaszYemini} would follow from \cref{thm:main} and a long-standing conjecture of Thomassen~\cite{thomassen_1989} that, for every positive integer $k$, every sufficiently highly connected graph contains a $k$-connected bipartite spanning subgraph.

\subsection{\texorpdfstring{\boldmath $(a,b)$}{(a,b)}-birigidity}\label{subsection:concludingbipartite}

Another generalisation of \cref{thm:main} concerns {$(a,b)$-birigidity}. 
This notion was introduced by Kalai, Nevo and Novik~\cite{KNN}, motivated in part by potential applications to upper bound conjectures for simplicial complexes and lower bound conjectures for cubic complexes. %
Given a pair of positive integers $a,b$ and a bipartite graph $G = (V,E)$ with bipartition $V = (X,Y)$, we define an \emph{$(a,b)$-realisation} of $G$ as a pair $(p,q)$, where $p : X \to \R^a$ and $q : Y \to \R^b$. The {\em  birigidity matroid of} $(G,p,q)$, denoted by $\cB_d(G,p,q)$, is the row matroid of the $|E|\times (b|X|+a|Y|)$ matrix $B(G,p,q)$ with rows indexed by $E$ and columns indexed by $(\{1,\ldots,b\} \times X) \cup (\{1,\ldots,a\} \times Y)$, in which the row indexed by an edge $xy\in E$ is 
\vspace{-.5em}
\[
\kbordermatrix{
& &  x & & y & \\
e=xy & 0 \dots 0 & q(y) & 0\dots 0 & p(x) & 0\dots 0
}.
\]
The {\em $(a,b)$-birigidity matroid of $G$}, $\cB_{a,b}(G)$, is given by $\cB(G,p,q)$ for any generic $(p,q)$. It is known that
\[
\rank \cB_{a,b}(K_{m,n})=\begin{cases}
    bm+an-ab \mbox{ if $m\geq a$ and $n\geq b$},\\  
    nm \mbox{  otherwise}.
\end{cases}
\]
We say that $G$ is \emph{$(a,b)$-birigid} if either $|X| \geq a$, $|Y| \geq b$ and $\rank \cB_{a,b}(G) = b|X| + a|Y| - ab$ holds,
or if $G$ is a complete bipartite graph. Note that when $a=b=d$, we recover the notion of $d$-birigidity used throughout the paper.

It follows from~\cite[Lemma 3.12]{KNN} that if a bipartite graph is $d$-birigid, then it is also $(a,b)$-birigid for all $a,b \leq d$. Thus \cref{thm:main} immediately implies the following result.

\begin{theorem}\label{theorem:abrigid}
    For every pair of integers $a,b \geq 1$, there exists an integer $k_{a,b}$ such that every $k_{a,b}$-connected bipartite graph is $(a,b)$-birigid.
\end{theorem}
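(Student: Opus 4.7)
The plan is to use the reduction already hinted at in the paragraph immediately before the theorem: by Lemma 3.12 of Kalai--Nevo--Novik~\cite{KNN}, $d$-birigidity of a bipartite graph implies its $(a,b)$-birigidity for all $a,b\leq d$. So the whole argument collapses to invoking \cref{thm:main} at the right value of $d$.

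Concretely, I would set $d:=\max\{a,b\}$ and define $k_{a,b}:=k_d$, where $k_d=O(d^3)$ is the constant furnished by \cref{thm:main}. Now let $G$ be any $k_{a,b}$-connected bipartite graph. Since $k_{a,b}=k_d$, \cref{thm:main} yields that $G$ is $d$-birigid. Because $a\leq d$ and $b\leq d$, the cited consequence of \cite[Lemma 3.12]{KNN} then immediately gives that $G$ is $(a,b)$-birigid, as required.

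Since \cref{thm:main} and the KNN implication are both being used as black boxes, there is essentially no obstacle to overcome; the only content is the observation that taking $d=\max\{a,b\}$ is what makes the KNN reduction applicable in both coordinates simultaneously. Note that this gives the explicit quantitative bound $k_{a,b}=O(\max\{a,b\}^3)$, inherited from \cref{thm:main}. One minor check is that the small edge cases (when $|X|<a$ or $|Y|<b$, so the complete-bipartite clause in the definition of $(a,b)$-birigidity applies) are not an issue: a $k_{a,b}$-connected bipartite graph with $|X|<a\leq d$ or $|Y|<b\leq d$ must in fact equal $K_{|X|,|Y|}$ by the connectivity assumption, and is then $(a,b)$-birigid by definition.
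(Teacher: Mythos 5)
Your proof is exactly the paper's argument: the paper states the KNN reduction (``if a bipartite graph is $d$-birigid then it is $(a,b)$-birigid for all $a,b\leq d$'') and then asserts that \cref{theorem:abrigid} follows immediately from \cref{thm:main}, which is precisely your choice $d=\max\{a,b\}$, $k_{a,b}=k_d$. Your added edge-case check is harmless but vacuous: a $k_{a,b}$-connected bipartite graph with $k_{a,b}\geq 2$ automatically has both colour classes of size at least $k_{a,b}\geq\max\{a,b\}$, so the clause $|X|<a$ or $|Y|<b$ never triggers.
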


The bound on $k_{a,b}$ obtained from 
the proof of \cref{thm:main} is probably far from tight. We conjecture that the statement of \cref{theorem:abrigid} holds with $k_{a,b} = 2ab$.

\begin{conjecture}\label{conjecture:abrigid}
    Every $2ab$-connected bipartite graph is $(a,b)$-birigid.
\end{conjecture}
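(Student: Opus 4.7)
My plan is to adapt the framework developed in Sections 2--5 to the asymmetric $(a,b)$-setting, following the same overall template as the proof of \cref{thm:bicon}. The first step is to establish the $(a,b)$-analogues of the matroidal tools. Define an \emph{$(a,b)$-dimensional $0$-extension} to add a new vertex $v \in X$ together with $b$ edges to $Y$ (or $v \in Y$ with $a$ edges to $X$), and an \emph{$(a,b)$-dimensional double $1$-extension} to delete a bipartite edge $xy$ (with $x\in X, y\in Y$), add new vertices $u \in Y$ and $w \in X$ together with the edge $uw$, and join $u$ to $a$ vertices of $X$ including $x$ and $w$ to $b$ vertices of $Y$ including $y$. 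A rank calculation mirroring the proof of \cref{lem:ext} should verify that these operations preserve independence in $\cB_{a,b}$. I would then generalise the notion of matroid seed so that the degree threshold at $v$ is $b$ if $v \in X$ and $a$ if $v \in Y$, and re-prove the asymmetric analogues of \cref{lem:seed:altdef,lem:small_kernel,lemma:deletablevertices,lem:neighbours:clique}, taking care that the rank drops at a deletable pair $\{u,v\}$ now depend on the sides they lie on.

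Next, I would introduce an asymmetric notion of biconnectivity. One candidate is to call $G = (X \cup Y, E)$ \emph{$(a,b)$-biconnected} if $|X|, |Y|$ are sufficiently large and $G - W$ is connected for every $W \subseteq V$ with $|W \cap X| \leq \alpha$ and $|W \cap Y| \leq \beta$, where $\alpha$ and $\beta$ are chosen so that every $2ab$-connected bipartite graph is $(a,b)$-biconnected while still permitting a pairing argument in the style of Section 5. A sharp analogue of \cref{lem:tau_bound} would then need to be proved for critically $(a,b)$-biconnected graphs; essential separators now come in two asymmetric flavours, so the multiplicity bound in the adaptation of \cref{cl} should split according to which side of the bipartition contributes the ``dominant'' part of the cut. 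Combined with a probabilistic seed-finding argument paralleling \cref{lem:existsk}, this should yield a $\cB_{a,b}(G)$-seed that fails to cover all edges of any sufficiently connected $G$.

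The concluding induction would then mirror \cref{thm:bicon} closely: pick a minimum counterexample $G$, observe that it must be critically $(a,b)$-biconnected (otherwise delete a non-critical vertex and use the $(a,b)$-analogue of \cref{lem:ext}(a)), apply the matroid-seed machinery together with the generalised \cref{lemma:deletablevertices,lem:neighbours:clique} to find an adjacent pair $\{u,v\}$ outside the seed whose joint neighbourhoods are $\cB_{a,b}$-linked, and use $k$-biconnectivity to contradict either the maximality of $|E|$ or the fact that $G$ is critically $(a,b)$-biconnected.

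The main obstacle, and where I expect the argument to break down first, is achieving the tight constant $2ab$. The present proof of \cref{thm:main} is already not tight when $a=b=d$: it yields $k_d = O(d^3)$ rather than the conjectured $2d^2$, and this slack is inherited by the scheme above. Closing it would require both a sharper vertex-cover bound for critically $(a,b)$-biconnected graphs --- closer in spirit to the $|V|/(k+1)$ bound of \cref{lem:tau_bound2} than to the $|V|/(2k^2)$ bound of \cref{lem:tau_bound} --- and a seed construction of linear, not quadratic, size in the matroid parameters. Even identifying the right notion of asymmetric biconnectivity appears delicate: the natural threshold $\alpha = a-1, \beta = b-1$ is implied already by $(a+b-1)$-connectivity and is far too weak, while stricter thresholds seem to force a substantial loss in the pairing argument. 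Bridging this gap between the $2ab$ constant and what the current toolbox delivers is, in my view, the crux of the conjecture, and may well require a genuinely new idea beyond the direct adaptation sketched here.
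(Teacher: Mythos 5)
This statement is a \emph{conjecture}, and the paper does not prove it; there is no ``paper's own proof'' to compare against. What the paper does prove is \cref{theorem:abrigid}, which asserts only the existence of \emph{some} constant $k_{a,b}$, and it obtains this in a single line: by \cite[Lemma 3.12]{KNN}, $d$-birigidity implies $(a,b)$-birigidity for all $a,b \le d$, so \cref{thm:main} with $d = \max\{a,b\}$ gives $k_{a,b} = O(\max\{a,b\}^3)$. The paper then shows the conjectured value $2ab$ would be best possible via a modified Lov\'asz--Yemini construction, and records that the $\min\{a,b\}=1$ case follows from Berg and Jord\'an \cite{BJ}. That is the full extent of what is established.

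Your proposal is not a proof and you are explicit about that, which is the correct assessment. But two points are worth flagging. First, the elaborate plan of re-deriving the whole seed machinery with asymmetric degree thresholds ($b$ on one side, $a$ on the other) is unnecessary even for the weaker \cref{theorem:abrigid}: the monotonicity lemma of Kalai--Nevo--Novik short-circuits all of it, so you would only want to go down that road if the asymmetric machinery genuinely saved a factor, and as you observe it does not obviously do so. Second, your diagnosis of where the slack lives is accurate and matches the paper's own admissions: the vertex-cover bound $\tau(G)\ge |V|/(2k^2)$ in \cref{lem:tau_bound} and the probabilistic seed construction in \cref{lem:existsk} each cost roughly a factor of $k$ beyond what a tight argument could afford, and the paper itself only conjectures $k_d = 2d^2$ in the symmetric case without claiming the tools reach it. Your conclusion that the $2ab$ constant likely needs a genuinely new idea is the right one; as written, the proposal is a plausible roadmap for an $O(\max\{a,b\}^3)$ bound (already known) rather than an argument for the conjecture.
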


We can modify a well-known example of Lovász and Yemini~\cite{LY} to show that the connectivity hypothesis of \cref{conjecture:abrigid} would be best possible when $(a,b)\neq (1,1)$. 
Let $k = 2ab - 1$, and 
let $G_0 = (V_0,E_0)$ be a $k$-connected, $k$-regular bipartite graph with bipartition $(X_0,Y_0)$, where $|X_0|=|Y_0| = s$ is an even integer with $s > k \geq ab$.
Let $G = (V,E)$ be the graph obtained from $G_0$ by splitting every vertex $v \in V_0$ into a set $A_v$ of $k$ vertices of degree one and a set $B_v$ of $k$ isolated vertices, and then adding all edges between $A_v$ and $B_v$,

It is straightforward to check that $G$ is $k$-connected. For each $v \in V_0$, let $G_v$ denote the copy of $K_{k,k}$ induced by $A_v \cup B_v$ in $G$.
Since $|A_v| = |B_v| = k \geq \max(a,b)$, we have $r_{a,b}(G_v) = (a+b)k - ab$ 
for all $v \in V_0$, where $r_{a,b}$ is the rank function of $\cB_{a,b}(G)$. Now by writing $E = E_0 \cup \bigcup_{v \in V_0} E(G_v)$ and using the submodularity of $r_{a,b}$, we obtain
\begin{align*}
    r_{a,b}(G) \leq |E_0| + \sum_{v \in V_0} r_{a,b}(G_v) &= ks + 2s\big((a+b)k - ab\big) \\[-5mm] 
    &= a \cdot 2sk + b \cdot 2sk - s(2ab-k) \\ &= a|X| + b|Y| - s \\ &< a|X| + b|Y| - ab.
\end{align*}
Hence $G$ is not $(a,b)$-birigid.

We close by noting that when $\min\{a,b\} = 1$, the birigidity matroid $\cB_{a,b}(G)$ coincides with the \emph{$k$-plane matroid} of $G$ introduced by Whiteley \cite{Wscene}, where $k = \max\{a,b\}$. In this case, \cref{conjecture:abrigid} holds by a result of Berg and Jord\'an (\cite[Theorem 4]{BJ}).

\subsection{Global hyperconnectivity and skew-symmetric matrix completion}\label{sec:skew}

It is not obvious how to define the global $d$-hyperconnectivity of a graph and, to our knowledge, this concept has not yet appeared in the literature. We will give a definition for the special case when $d$ is even, and discuss how this concept relates to the completion of low-rank skew-symmetric matrices. 

Let $G=(V,E)$ be a graph. For all $p:V\to \R^{2d}$, let
$p_1$ (respectively, $p_2$) be the projection of $p$ onto its first (respectively, last) $d$ coordinates. For $u,v\in V$ put \[p(u)*p(v)=p_1(u)\cdot p_2(v)-p_2(u)\cdot p_1(v).\]   
We say that the framework $(G,p)$ is {\em globally hyperconnected} if every $2d$-dimensional framework $(G,q)$ which satisfies $p(u)*p(v)=q(u)*q(v)$ for all $uv\in E$ also satisfies the stronger condition that $p(u)*p(v)=q(u)*q(v)$ for all $u,v\in V$. 
The graph $G$ is said to be {\em globally $2d$-hyperconnected} if $(G,p)$ is globally hyperconnected for every generic realisation $p$ of $G$ in $\R^{2d}$.

The following proposition describes the connection between (global) hyperconnectivity and the completion of partially filled low-rank skew-symmetric matrices. It is based on the observation that, given a realisation $p$ of $K_n$ in $\R^{2d}$, we can construct a skew-symmetric $n\times n$  matrix $M_p=(m_{i,j})$ by putting $m_{i,j}=p(v_i)*p(v_j)$ for all $1\leq i,j\leq n$. We will say that $M_p$ is a {\em generic skew-symmetric matrix} whenever the map $p$ is generic.

\begin{proposition}
    Let $G$ be a graph on $n$ vertices and let $d$ be a positive integer.
    \begin{enumerate}
        \item $G$ is $\mathcal{H}_{2d}$-independent if and only if 
        every partially filled $n \times n$ skew-symmetric matrix with generic entries in cells corresponding to the edges of $G$   can be completed to a skew-symmetric complex matrix of rank at most $2d$.
        \item $G$ is globally $2d$-hyperconnected if and only if 
        every generic  skew-symmetric $n\times n$ real matrix of rank $2d$ is uniquely determined by the subset of entries that correspond to the edges of $G$.
    \end{enumerate}
\end{proposition}
\begin{proof}[Proof sketch]
    Part \emph{(a)} is a restatement of the fact that $\mathcal{H}_{2d}$ is the algebraic matroid of the variety of $n \times n$ skew-symmetric complex matrices of rank at most $2d$, see \cite[Proposition 3.1]{RS} and note that, although  the statement of \cite[Proposition 3.1]{RS} is for real skew symmetric matrices,  the same proof works over the complexes. Part \emph{(b)} follows from the fact that 
    the mapping $p \mapsto M_p$ is surjective onto the variety of $n \times n$ real skew-symmetric matrices of rank at most $2d$, and
    from the definition of global hyperconnectivity. 
\end{proof}

It as an open problem to decide whether a sufficient condition analogous to \cref{thm:redundant} holds for global hyperconnectivity. 

\begin{conjecture}\label{conj:globalhyperconnected}
    Suppose $d$ is an integer and $G = (V,E)$ is a graph. If $G-v$ is $2d$-hyperconnected for all $v \in V$, then $G$ is globally $2d$-hyperconnected.
\end{conjecture}

Combined with \cref{thm:mindegree}(a), \cref{conj:globalhyperconnected} would give an analogue of \cref{thm:glob}(a) for global hyperconnectivity and, as a consequence, an analogue of \cref{thm:matrix}(a) for skew-symmetric matrices.

\subsection{Complex global completability  and symmetric matrix completion}\label{sec:complex}
We noted in the Introduction that a partially filled $n\times n$ matrix  with generic entries is completable to a symmetric complex matrix of rank $d$ if and only if the subset of $E(K_n^o)$ defined by the positions of the given entries in is independent in $\cS_d(K_n^o)$, and that a generic positive semidefinite   $n\times n$ matrix of rank $d$ is uniquely determined by a subset of its entries if and only if the spanning subgraph of $E(K_n^o)$ defined by the positions of the fixed entries is globally $d$-completable. In this context, it is natural to ask when a ``generic'' symmetric (real or complex) matrix of rank $d$ is uniquely determined by a subset of its entries. The fact that every symmetric  $n\times n$ matrix $M$ of rank $d$ can be factored as $M=A^TA$ for some $d\times n$ complex matrix $A$ indicates that we should consider the complex version of this problem, and define  $M=A^TA$ to be a {\em generic symmetric matrix} if the entries in $A$ are algebraically independent over $\rat$ for some such factorisation. This, in turn, leads us to consider frameworks in $\complex^d$. 

Given a graph $G=(V,E)$ and a realisation $p:V\to \complex^d$, we say that the framework $(G,p)$
 is {\em globally completable in $\complex^d$} if every realisation  $q:V\to  \complex^d$ which satisfies $q(u)^Tq(v)=p(u)^Tp(v)$ for all $uv\in E$ must also satisfy $q(u)^Tq(v)=p(u)^Tp(v)$ for all $u,v\in V$. The graph $G$ is said to be {\em globally completable in $\complex^d$} if every generic realisation of $G$ in $\complex^d$ is globally completable.
Since $\complex$ is algebraically closed, it is equivalent to say $G$ is {globally completable in $\complex^d$} if {\em some} generic realisation of $G$ in $\complex^d$ is globally completable.
(This statement does not hold for real realisations: an example of a graph which has two generic realisations in $\real^2$, one of which is globally completable   in $\real^2$ and the other is not, is given in \cite{JJT2}.)

The above definitions immediately imply that a generic symmetric $n\times n$ matrix of rank $d$ is uniquely determined by a subset $S$ of its  entries (in the set of all  symmetric $n\times n$ complex matrices of rank $d$) if and only if the spanning subgraph of $K_n^o$ defined by the positions of the entries in $S$ is globally completable in $\complex^d$.

 It seems likely that the sufficient condition for global completability in $\real^d$ given in \cref{thm:redundant}(a) extends to  $\complex^d$. If so, then this would give an analogous extension of Theorems \ref{thm:matrix}(a) and \ref{thm:glob}(a), and would imply, in particular, that every generic real symmetric matrix of rank $d$ 
is uniquely determined by any subset of its entries which includes at least $(n + d + 1)/2$ entries from each row.

\subsection{Matrix completion from a randomly chosen  set of entries}
We can use \cref{thm:glob}(b) to obtain a sharp threshold 
		for the number of randomly chosen entries required to uniquely determine a generic	
	 rank $d$, $m\times n$ matrix. Specifically, we consider the commonly studied model where the entries are chosen uniformly at random, i.e., the underlying graph is an Erdős--Rényi bipartite graph.
		By a result of Ruciński~\cite{Ruc}, for any integer $r \ge 1$ and constant $0 < \alpha \le 1$, a random bipartite graph 
        with parts of sizes $m$ and $n = \alpha m$ and 
        $m\log m + r\log\log m$ edges is asymptotically almost surely $r$-connected.
		When $r = k_d+1$, \cref{thm:glob}(b) implies that such graphs are globally $d$-birigid. Equivalently:
		\begin{corollary}\label{coro:random:matrix}
			Let $\alpha$ be a constant with $0< \alpha\leq 1$. The probability that a generic matrix in $M_d(m,\alpha m)$ is uniquely determined by $m\log m+(k_d+1)\log\log m$ entries chosen uniformly at random tends to $1$ as $m$ tends to infinity.
		\end{corollary}
		The bound is sharp up to the factor of $\log \log m$, as $m\log m$ edges are necessary already for connectivity. An alternative proof of this statement can be obtained by following the approach of Hamaguchi and Tanigawa \cite{HT2024} (which is partly based on \cite{LNPR}), who recently studied the
        problem of higher-order tensor completion with randomly sampled entries. 
        
        It is known that if the 
        matrix $M$ in $M_d(m,\alpha m)$
        satisfies a so-called {\emph{incoherence} property (or is drawn from a corresponding random model), then $\Omega(dm \log m)$ randomly chosen entries are necessary to uniquely determine $M$; moreover, there exists an efficient
        algorithm that recovers $M$ from $O(dm \log m)$ random entries \cite{CT2010}. 
        However, these bounds do not directly carry over to our setting,
        as genericity and incoherence are not comparable properties,  see  \cite{KTT}.
        In fact, \cref{coro:random:matrix} shows that if we consider generic matrices, then the information-theoretic bound on the number of entries asymptotically equals $m\log m$ for any fixed $d$.
        We should emphasise, however,  that
        our results
        do not provide an algorithm to recover the matrix $M$ from the given set of entries.

\section*{Acknowledgements}

BJ was supported by the MTA Distinguished Guest Scientist Fellowship Programme 2025.
TJ was supported by the National Research, Development and Innovation Office
of Hungary, grant no. Advanced 152786, 
the MTA-ELTE Momentum Matroid Optimization Research Group, and the National Research, Development and Innovation Fund of Hungary, financed under the ELTE TKP 2021-NKTA-62 funding scheme.

\printbibliography

\end{document}